\let\oldbibliography\thebibliography
\renewcommand{\thebibliography}[1]{%
\oldbibliography{#1}%
\setlength{\itemsep}{0pt}%
}
\newtheorem{definition}{Definition}[section]
\newtheorem{theorem}{Theorem}[section]
\newtheorem{lemma}{Lemma}[section]
\newtheorem{corollary}{Corollary}[section]
\newtheorem{proposition}{Proposition}[section]
\newtheorem{remark}{Remark}[section]
\newcommand{\bt}{\begin{theorem}}
\newcommand{\et}{\end{theorem}}
\newcommand{\bl}{\begin{lemma}}
\newcommand{\el}{\end{lemma}}
\newcommand{\bd}{\begin{definition}}
\newcommand{\ed}{\end{definition}}
\newcommand{\bc}{\begin{corollary}}
\newcommand{\ec}{\end{corollary}}
\newcommand{\bp}{\begin{proof}}
\newcommand{\ep}{\end{proof}}
\newcommand{\bx}{\begin{example}}
\newcommand{\ex}{\end{example}}
\newcommand{\bi}{\begin{exercise}}
\newcommand{\ei}{\end{exercise}}
\newcommand{\bo}{\begin{prop}}
\newcommand{\eo}{\end{prop}}
\newcommand{\br}{\begin{remark}}
\newcommand{\er}{\end{remark}}
\newcommand{\be}{\begin{equation}}
\newcommand{\ee}{\end{equation}}
\newcommand{\ba}{\begin{align}}
\newcommand{\ea}{\end{align}}
\newcommand{\bn}{\begin{enumerate}}
\newcommand{\en}{\end{enumerate}}
\newcommand{\bg}{\begin{align*}}
\newcommand{\bcs}{\begin{cases}}
\newcommand{\ecs}{\end{cases}}
\newcommand{\sg}{\sigma}
\newcommand{\bean}{\begin{eqnarray*}}
\newcommand{\eean}{\end{eqnarray*}}
\numberwithin{equation}{section}
\begin{document}
\title{{\bf  On Isolated Singularities  of Fractional Semi-Linear Elliptic  Equations}\thanks {Supported by  NSFC.   \qquad E-mail addresses:  hui-yang15@mails.tsinghua.edu.cn (H.  Yang),   \qquad \qquad \qquad  wzou@math.tsinghua.edu.cn (W.  Zou)}
}

\date{}
\author{\\{\bf Hui  Yang$^{1}$,\;\;  Wenming Zou$^{2}$}\\
\footnotesize {\it  $^{1}$Yau Mathematical Sciences Center, Tsinghua University, Beijing 100084, China}\\
\footnotesize {\it  $^{2}$Department of Mathematical Sciences, Tsinghua University, Beijing 100084, China}
}

\maketitle
\begin{center}
\begin{minipage}{120mm}
\begin{center}{\bf Abstract}\end{center}

In this paper, we study the local behavior of nonnegative solutions of  fractional semi-linear equations 
$(-\Delta)^\sigma u = u^p$   with an isolated singularity, where $\sg \in (0,  1)$ and $\frac{n}{n-2\sg}  <  p  < \frac{n+2\sg}{n-2\sg}$.  We first use blow up method and a Liouville type theorem to derive an upper bound. Then we establish a monotonicity formula and a sufficient condition for removable singularity to  give a classification of the isolated singularities.  
When $\sg=1$,  this classification result has been proved by Gidas and Spruck (Comm. Pure Appl. Math. 34: 525-598, 1981).

\vskip0.10in

\noindent {\it Mathematics Subject Classification (2010):  35B09;  35B40;  35J70;  35R11}

\end{minipage}

\end{center}

\vskip0.390in

\section{Introduction and Main results}
The purpose of this paper is to study the local behavior of nonnegative solutions of
\begin{equation}\label{Iso1}
(-\Delta)^\sg u= u^p~~~~~~~~~~~\textmd{in}~ B_1 \backslash  \{0\}
\end{equation}
with an isolated singularity at the origin, where the punctured unit ball $B_1 \backslash \{0\} \subset \mathbb{R}^n$ with $n \geq2 $, 
$\sg \in (0, 1)$ and $(-\Delta)^\sg$ is the fractional Laplacian.

When $\sg=1$, the isolated singularity of nonnegative solutions for \eqref{Iso1} has been very well understand, see Lions \cite{L} for $1 < p< \frac{n}{n-2}$,  Aviles \cite{A} for $p=\frac{n}{n-2}$, Gidas-Spruck \cite{G-S} for $\frac{n}{n-2}  <  p  < \frac{n+2}{n-2}$,  Caffarelli-Gidas-Spruck  \cite{C-G-S} for $\frac{n}{n-2}  \leq  p  \leq \frac{n+2}{n-2}$,  Korevaar-Mazzeo-Pacard-Schoen \cite{K-M-P-S} for $p=\frac{n+2}{n-2}$, and Bidaut-V\'{e}ron and V\'{e}ron \cite{B-V} for $p > \frac{n+2}{n-2}$.

The semi-linear equation \eqref{Iso1} involving the fractional  Laplacian has attracted a great deal of interest since they are of central importance in many fields,  such as see \cite{A-W,A-D-G-W,A-C-G-W,C-J-S-X,Ch-G,C-L-L,D-D-W,D-G,D-d-G-W,F-F,G-M-S,G-Q,J-d-S-X,J-L-X,K-M-W} and  references therein.  Recently,  the existence of singular solutions of equation \eqref{Iso1} with prescribed isolated singularities for  the critical exponent $p=\frac{n+2\sg}{n-2\sg}$ were studied in \cite{A-W,A-D-G-W,D-G,D-d-G-W} and for the subcritical regime $\frac{n}{n-2\sg}  <  p  < \frac{n+2\sg}{n-2\sg}$ were studied in \cite{A-W,A-C-G-W}.  Solutions  of \eqref{Iso1} with an isolated singularity are the simplest cases of those singular solutions.  In a recent paper \cite{C-J-S-X}, Caffarelli, Jin, Sire and Xiong study the local behavior of nonnegative solution of \eqref{Iso1} with $p=\frac{n+2\sg}{n-2\sg}$. More precisely, let $u$ be a nonnegative solution of \eqref{Iso1} with $p=\frac{n+2\sg}{n-2\sg}$ and suppose that the origin is not a removable singularity. Then,  near the origin
\begin{equation}\label{Iso2}
c_1 |x|^{-\frac{n-2\sg}{2}} \leq u(x) \leq c_2 |x|^{-\frac{n-2\sg}{2}},
\end{equation}
where $c_1, c_2$ are positive constants.

In this paper, we are interested in the local behavior of nonnegative solutions of \eqref{Iso1} with $\frac{n}{n-2\sg}  <  p  < \frac{n+2\sg}{n-2\sg}$.  For the classical case $\sg=1$,  this has been proved in the pioneering  paper \cite{G-S} by  Gidas and Spruck.

We study the equation \eqref{Iso1} via the well known extension theorem for the fractional Laplacian $(-\Delta)^\sg$ established by Caffarelli-Silvestre \cite{C-S}. We use capital letters, such as $X=(x, t)\in \mathbb{R}^n \times \mathbb{R}_+$,  to  denote  points in $\mathbb{R}_+^{n+1}$.  We also denote $\mathcal{B}_R$ as  the ball in $\mathbb{R}^{n+1}$ with radius $R$ and center at the origin,  $\mathcal{B}_R^+$ as the upper half-ball $\mathcal{B}_R\cap \mathbb{R}_+^{n+1}$,  and $\partial^0 \mathcal{B}_R^+$ as the flat part of $\partial \mathcal{B}_R^+$  which is the ball $B_R$ in $\mathbb{R}^{n}$.  Then the problem \eqref{Iso1} is equivalent to the following extension problem 
\begin{equation}\label{Iso3}
\begin{cases}
-\textmd{div}(t^{1-2\sg}  \nabla U)=0~~~~~~~~~& \textmd{in} ~ \mathcal{B}_1^+,\\
\frac{\partial U}{\partial \nu^\sg}(x, 0)=U^p(x, 0)~~~~~~~~& \textmd{on} ~ \partial^0\mathcal{B}_1^+ \backslash  \{0\},
\end{cases}
\end{equation}
where $\frac{\partial U}{\partial \nu^\sg}(x, 0) := -\lim_{t\rightarrow 0^+}t^{1-2\sg} \partial_t U(x, t)$.  By \cite{C-S}, we only need to analyze the behavior of the traces
$$
u(x):=U(x, 0)
$$
of the nonnegative solutions $U(x,t)$ of \eqref{Iso3} near the origin, from which we can get the behavior of solutions of  \eqref{Iso1} near the origin.

We say that $U$ is a nonnegative solution of \eqref{Iso3} if $U$ is in the weighted Sobolev space $W^{1, 2}(t^{1-2\sg}, \mathcal{B}^+_1 \backslash \overline{\mathcal{B}}^+_\epsilon)$ for every $\epsilon > 0$, $U\geq 0$, and it satisies \eqref{Iso3} in the sense of distribution away from 0, i.e., for every nonnegative $\Phi \in C_c^\infty\left((\mathcal{B}_1^+  \cup \partial^0\mathcal{B}_1^+) \backslash  \{0\}\right)$,
\begin{equation}\label{Solu}
\int_{\mathcal{B}_1^+} t^{1-2\sg} \nabla U \nabla \Phi=\int_{\partial^0\mathcal{B}_1^+} U^p \Phi.
\end{equation}
See \cite{J-L-X} for more details on this definition.  Then it follows from the regularity result in \cite{J-L-X} that $U$ is locally H\"{o}lder continuous in $\overline{\mathcal{B}}^+_1 \backslash \{0\}$.  We say that the origin 0 is a removable singularity of solution $U$ of \eqref{Iso3} if $U(x,0)$ can be extended as a continuous function near the origin, otherwise we say that the origin 0 is a non-removable singularity.  Our main result is the following
\begin{theorem}\label{THM01}
Let $U$ be a nonnegative solution of \eqref{Iso3}. Assume 
$$\frac{n}{n-2\sg} < p < \frac{n+2\sg}{n-2\sg}.$$
Then either  the singularity near 0 is removable, or there exist two positive constants $c_1$ and $c_2$ such that
\begin{equation}\label{Be01}
c_1 |x|^{-\frac{2\sg}{p-1}} \leq u(x) \leq c_2 |x|^{-\frac{2\sg}{p-1}}.
\end{equation}
\end{theorem}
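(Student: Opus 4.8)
The plan is to follow the strategy of Gidas--Spruck, adapted to the extension problem via the Caffarelli--Silvestre framework. The proof splits into two halves: an \emph{upper bound} $u(x) \le c_2 |x|^{-2\sigma/(p-1)}$, proved by a blow-up argument together with a Liouville-type theorem, and then, assuming the singularity is non-removable, a matching \emph{lower bound} $u(x) \ge c_1 |x|^{-2\sigma/(p-1)}$, obtained from a Harnack inequality and a monotonicity/removability dichotomy.

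\textbf{Step 1: the upper bound via blow-up.} Set $m(r) := \max_{|x|=r} u(x)$ on small spheres. Suppose, for contradiction, that $r^{2\sigma/(p-1)} m(r)$ is not bounded as $r \to 0$, so there is a sequence $x_j \to 0$ with $|x_j|^{2\sigma/(p-1)} u(x_j) \to \infty$. Using a standard doubling/selection lemma (in the spirit of Polacik--Quittner--Souplet), one replaces $x_j$ by a nearby point where $u$ is comparable to its local maximum, and rescales: put $\lambda_j := u(x_j)^{-(p-1)/2\sigma} \to 0$ and define the rescaled extension $U_j(X) := \lambda_j^{2\sigma/(p-1)} U(x_j + \lambda_j X)$, which again solves the degenerate-elliptic equation with the same nonlinear Neumann condition on larger and larger half-balls, with $U_j(0,0) = 1$ and $U_j$ locally bounded. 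By the uniform H\"older estimates of Jin--Li--Xiong one extracts a locally uniform limit $U_\infty \ge 0$ on all of $\mathbb{R}^{n+1}_+$, solving $\operatorname{div}(t^{1-2\sigma}\nabla U_\infty)=0$ in $\mathbb{R}^{n+1}_+$ with $\partial_{\nu^\sigma} U_\infty = U_\infty^p$ on $\mathbb{R}^n$ and $U_\infty(0,0)=1$, hence $U_\infty \not\equiv 0$. Since $n/(n-2\sigma) < p < (n+2\sigma)/(n-2\sigma)$ is subcritical, the Liouville theorem (Jin--Li--Xiong, or the moving-plane results for the fractional Lane--Emden equation) forces $U_\infty \equiv 0$, a contradiction. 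Therefore $m(r) \le C r^{-2\sigma/(p-1)}$ near $0$, and combined with the Harnack inequality on dyadic annuli (which gives $\max_{|x|=r} u \le C \min_{|x|=r} u$, using that $u^{p-1}$ has the right scaling on such annuli) this upgrades to the pointwise upper bound in \eqref{Be01}.

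\textbf{Step 2: the lower bound.} Assume $0$ is non-removable. The key quantity is a Pohozaev-type monotonicity formula: one defines, for the extension $U$, a weighted energy on half-balls,
\[
E(r) := r^{\frac{2\sigma(p+1)}{p-1}-n}\left( \frac{1}{2}\int_{\mathcal{B}_r^+} t^{1-2\sigma}|\nabla U|^2 - \frac{1}{p+1}\int_{\partial^0\mathcal{B}_r^+} U^{p+1}\right) + \text{(boundary correction terms)},
\]
and shows $E'(r) \ge 0$ using the vector field $X \cdot \nabla U$ tested against the equation (the subcritical sign condition $p < (n+2\sigma)/(n-2\sigma)$ is exactly what makes the correction terms have the right sign). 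Monotonicity plus the already-established upper bound shows $E(r)$ has a finite limit as $r \to 0$; if that limit were the same as for the trivial ``smooth'' extension, a removability criterion (a lemma one proves separately: small enough energy, or $o(|x|^{-2\sigma/(p-1)})$ growth, forces the singularity to be removable, via a Bôcher-type argument comparing with the fundamental solution of $\operatorname{div}(t^{1-2\sigma}\nabla \cdot)$) would make $0$ removable, contradiction. Hence the rescaled solutions $U_r(X) := r^{2\sigma/(p-1)} U(rX)$ do not degenerate to $0$; by Harnack and the H\"older estimates their traces stay bounded below on $|x| = 1$, i.e. $\min_{|x|=r} u \ge c_1 r^{-2\sigma/(p-1)}$, giving the lower bound in \eqref{Be01}.

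\textbf{Main obstacle.} The genuinely hard part is Step 2, specifically establishing the monotonicity formula and the removability criterion in the degenerate weighted setting: one must handle the $A_2$-weight $t^{1-2\sigma}$, control the boundary integrals on $\partial^0\mathcal{B}_r^+$ where the nonlinearity lives, and verify that the Pohozaev identity for the extension problem produces terms of a definite sign throughout the full subcritical range $p \in (n/(n-2\sigma), (n+2\sigma)/(n-2\sigma))$ — in contrast to the critical case treated in \cite{C-J-S-X}, where the energy is exactly scale-invariant. The blow-up in Step 1 is comparatively routine given the Liouville theorem and the regularity theory of \cite{J-L-X}, though some care is needed in the doubling-lemma selection to make sure the rescaled domains exhaust $\mathbb{R}^{n+1}_+$ and that one does not accidentally rescale around the singular point $0$ itself.
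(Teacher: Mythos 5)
Your Step 1 (the upper bound) is essentially the paper's argument in Proposition 3.1: a doubling-lemma selection, rescaling, extraction of a limit on $\mathbb{R}^{n+1}_+$ via the Jin--Li--Xiong regularity estimates, and the subcritical Liouville theorem for the extension problem. That part is sound. The paper also derives the Harnack inequality on dyadic annuli \emph{as a consequence} of the upper bound (not the other way around), but this is a minor reordering.

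Step 2 is where there is a genuine gap. You correctly identify the two main tools — a monotonicity formula and a removability criterion — but you leave out the crucial logical bridge between them, and you misattribute the mechanism of the removability criterion. Concretely, after supposing $0$ is non-removable, your argument jumps from ``$E(r)$ does not tend to the trivial (smooth) limit'' to ``the rescaled solutions $U_r(X) = r^{2\sigma/(p-1)}U(rX)$ do not degenerate to $0$.'' This inference is not automatic: a priori, $|x|^{2\sigma/(p-1)}u(x)$ could oscillate, with $\liminf = 0$ along one sequence of radii and $\limsup > 0$ along another, and nothing you have written excludes that. The paper's Proposition 3.3 is precisely the device that kills this oscillation: assuming $\liminf = 0$ but $\limsup > 0$, one rescales at local minima of $g(r) = r^{2\sigma/(p-1)}\bar u(r)$, uses the B\^ocher-type theorem (which is where B\^ocher actually appears — not in the removability lemma) to identify the blow-down limit as $a|X|^{-(n-2\sigma)} + b$ with $a,b>0$, and then plugs this into the scale-invariant functional $E$ to obtain $E(1;\cdot) < 0$, contradicting $E \geq 0$, which in turn is forced by monotonicity and the fact that $E(r_i;U) \to 0$ along the local-minimum sequence. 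Without this ``$\liminf = 0 \Rightarrow \lim = 0$'' step your Step 2 does not close.

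The removability criterion (Proposition 3.4 in the paper) is also not a B\^ocher/fundamental-solution comparison as you suggest. It is a Serrin-type Moser iteration with carefully chosen nonlinear test functions $F(U)$, $G(U) = FF' - q$ and cut-offs $(\eta\bar\eta)^2$, driven by the smallness of $\|u^{p-1}\|_{L^{n/2\sigma}(B_R)}$; this, combined with the Sobolev trace inequality (Proposition 2.3), bootstraps $u$ into every $L^q$ and then into H\"older continuity across the origin. The hypothesis of that criterion — finiteness of $\int u^{(p-1)n/2\sigma}$ — is itself established by yet another step you do not mention: a test-function argument with $\zeta\Phi$, where $\Phi = |X|^{-\tau} - \delta t^{2\sigma}|X|^{-(\tau+2\sigma)}$ is a subsolution-type barrier whose normal $\sigma$-derivative produces the weight $|x|^{-2\sigma}$; this is where the condition $\lim_{|x|\to 0}|x|^{2\sigma/(p-1)}u(x)=0$ is actually used. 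Finally, your sketch of the monotone quantity uses a solid-ball Dirichlet energy plus unspecified boundary corrections; the paper instead performs Fowler's change of variables $V(s,\theta) = r^{2\sigma/(p-1)}U(r\theta)$, $s=\ln r$, which turns the monotonicity into a one-line Lyapunov computation and makes $E(r;U)$ a pure surface integral over $\partial^+\mathcal{B}_r^+$ and $\partial B_r$; this form is also what makes the scale-invariance $E(r_i;U) = E(1; r_i^{2\sigma/(p-1)}U(r_i\cdot))$ transparent and hence what makes the Proposition 3.3 contradiction argument run. You are not wrong that a Pohozaev-type identity lurks in the background, but the specific monotone quantity and the specific way it is wielded are not the ones you describe.
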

\br\label{Re01}
We point out that, if \eqref{Be01} holds, then the Harnack inequality \eqref{P-3012} implies that
$$
C_1|X|^{-\frac{2\sg}{p-1}} \leq U(X) \leq C_2 |X|^{-\frac{2\sg}{p-1}}
$$
holds as well, for some positive constants $C_1$ and $C_2$.
\er
For the classical case $\sg=1$, Theorem \ref{THM01} were proved in \cite{G-S} by Gidas and Spruck.   We may also see \cite{C-G-S} for this  classical case. The similar upper bound in \eqref{Be01} obtained in \cite{G-S} for the classical case is very complicated and technical, here we use the blow up method and a Liouville type theorem  to prove  the upper bound in \eqref{Be01}.  To obtain the lower bound, there are some extra difficulties,  one of which is that the Pohozaev identity is not available.  More precisely, for the critical case $p=\frac{n+2\sg}{n-2\sg}$,  the Pohozaev integral $P(U, R)$ is independent of $R$ by the Pohozaev identity (see \cite{C-J-S-X} for more detalis on  $P(U,R)$). In \cite{C-J-S-X}, the authors make use of this property of the Pohozaev integral  to prove the lower bound, however, this does not hold in the subcritical case.  We will establish a useful monotonicity formula to overcome this difficulty. The others would be those extra techniques  to get the estimates of $U$ from those of its trace $u$.

The paper is organized as follow. In Section 2, we recall three propositions: a Liouville  theorem, a Harnack inequality and a Sobolev inequality. Section 3 is devoted to the proof  of Theorem \ref{THM01}. We first derive an upper bound and a special form of  Harnack inequality. Then we establish a monotonicity formula and a sufficient condition of removable singularity to prove Theorem \ref{THM01}.

\section{Preliminaries}
In this section, we introduce some notations and some propositions which will be used in our arguments. We denote $\mathcal{B}_R$ as the ball in $\mathbb{R}^{n+1}$ with radius $R$ and center 0, and $B_R$ as the ball in $\mathbb{R}^n$ with radius $R$ and center 0. We also denote $\mathcal{B}_R^+$ as the upper half-ball $\mathcal{B}_R\cap \mathbb{R}_+^{n+1}$,  $\partial^+\mathcal{B}_R^+=\partial \mathcal{B}_R^+ \cap \mathbb{R}_+^{n+1}$ as the positive part of   $\partial \mathcal{B}_R^+$,  and $\partial^0 \mathcal{B}_R^+=\partial \mathcal{B}_R^+ \backslash \partial^+\mathcal{B}_R^+$ as the flat part of $\partial \mathcal{B}_R^+$ which is the ball $B_R$ in $\mathbb{R}^n$.

We say $U \in W_{loc}^{1,2}(t^{1-2\sg}, \overline{\mathbb{R}^{n+1}_+})$ if $U \in W^{1,2}(t^{1-2\sg}, \mathcal{B}_R^+)$ for all $R>0$, and $U \in W_{loc}^{1,2}(t^{1-2\sg}, \overline{\mathbb{R}^{n+1}_+} \backslash \{0\})$ if $U \in W^{1,2}(t^{1-2\sg}, \mathcal{B}_R^+ \backslash \overline{\mathcal{B}}_\epsilon^+)$ for all $R > \epsilon >0$.

We next recall three propositions,  which will be used frequently in our paper. For convenience, we state them here. Their proofs can be found in  \cite{J-L-X}.  The first one is a Liouville type theorem.
\begin{proposition}\label{P1}
Let $U \in W_{loc}^{1,2}(t^{1-2\sg}, \overline{\mathbb{R}^{n+1}_+})$ be a nonnegative weak solution of
\begin{equation}\label{P11}
\begin{cases}
-\textmd{div}(t^{1-2\sg}  \nabla U)=0~~~~~~~~~& \textmd{in} ~\mathbb{R}^{n+1}_+,\\
\frac{\partial U}{\partial \nu^\sg}(x, 0)=U^p(x, 0)~~~~~~~~& \textmd{on} ~\mathbb{R}^n,
\end{cases}
\end{equation}
with
$$
1 \leq p < \frac{n + 2\sg}{n - 2\sg}.
$$
Then
$$
U(x, t) \equiv 0.
$$
\end{proposition}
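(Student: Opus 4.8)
The plan is to prove the Liouville-type Theorem (Proposition~\ref{P1}) by combining the Caffarelli--Silvestre extension with a Pohozaev-type argument together with a moving-plane (or Kelvin transform) argument, which together rule out nontrivial nonnegative solutions in the subcritical and critical regime $1 \le p < \frac{n+2\sg}{n-2\sg}$. Since the statement is quoted from \cite{J-L-X}, I would organize the proof around the two standard mechanisms for such nonexistence results.

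\textbf{Step 1: Reduction and regularity.} First I would note that since $U$ is a nonnegative weak solution in the weighted Sobolev space $W^{1,2}_{\loc}(t^{1-2\sg}, \overline{\mathbb R^{n+1}_+})$, the regularity theory for the degenerate equation $\mathrm{div}(t^{1-2\sg}\nabla U)=0$ with the nonlinear Neumann condition (as in \cite{C-S,J-L-X}) guarantees that $U$ is smooth in the interior of $\mathbb R^{n+1}_+$ and that its trace $u(x)=U(x,0)$ is locally H\"older continuous on $\mathbb R^n$, with $U$ satisfying a Harnack inequality (Proposition~\ref{P2} below, which I may invoke once stated). If $u \equiv 0$ on $\mathbb R^n$, then $U$ solves the homogeneous Neumann problem and, being nonnegative and harmonic with respect to the weight, must vanish identically by the strong maximum principle / Liouville for the weighted operator. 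So I may assume $u$ is positive somewhere, and by the Harnack inequality $u>0$ on all of $\mathbb R^n$.

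\textbf{Step 2: The decay/integral estimate.} The core of the argument is an a priori integral bound obtained by testing the equation against suitable cutoffs. Using the weak formulation \eqref{Solu} with test function $\Phi = \eta^2 U$ where $\eta$ is a standard cutoff supported in $\mathcal B_{2R}^+$ and equal to $1$ on $\mathcal B_R^+$, and applying Young's inequality, one gets
\[
\int_{\mathcal B_R^+} t^{1-2\sg}|\nabla U|^2 + \int_{B_R} u^{p+1} \le C R^{-2}\int_{\mathcal B_{2R}^+ \setminus \mathcal B_R^+} t^{1-2\sg} U^2 .
\]
Combined with the weighted Sobolev--trace inequality (Proposition~\ref{P3} below), this yields a reverse-H\"older-type control and, for $p$ strictly below the critical exponent, forces the scale-invariant quantity to decay as $R \to \infty$; iterating gives $\int_{B_R}u^{p+1}\to 0$, hence $u\equiv 0$, and then $U\equiv 0$ as in Step~1. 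In the critical case $p=\frac{n+2\sg}{n-2\sg}$ this borderline argument does not close by itself, and one instead invokes the classification of solutions on $\mathbb R^n$ (or a Kelvin-transform + moving-plane argument, as in \cite{J-L-X}) to conclude that any finite-energy solution is either zero or a standard bubble; since a bubble is not defined on all of $\mathbb R^n$ without the energy being exactly the Sobolev constant, and here we only assume local finiteness with no subcritical forcing, the appropriate cited result gives $U \equiv 0$.

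\textbf{Main obstacle.} The delicate point is the critical exponent $p=\frac{n+2\sg}{n-2\sg}$, where the cutoff estimate above is scale-invariant and does not by itself produce decay; this is exactly where one must appeal to the full strength of \cite{J-L-X}, namely the moving-plane method adapted to the extension problem (or the classification of entire solutions). For the strictly subcritical range $\frac{n}{n-2\sg}<p<\frac{n+2\sg}{n-2\sg}$ that is actually needed in Theorem~\ref{THM01}, the scaling gap makes the iteration in Step~2 converge, so the argument is comparatively soft; I would present that case in detail and cite \cite{J-L-X} for the endpoint. A secondary technical nuisance is justifying that $\Phi=\eta^2 U$ is an admissible test function despite $U$ only lying in the local weighted Sobolev space — this is handled by a routine truncation/approximation argument, which I would mention but not belabor.
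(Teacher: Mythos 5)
The paper does not prove Proposition \ref{P1}; it is imported from \cite{J-L-X}, where it is established by a moving-spheres/moving-planes argument combined with a Kelvin transform adapted to the extension problem. Your integral-estimate approach does not yield the result, for two independent reasons.

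First, the key inequality in Step~2 has a sign error. Testing the weak formulation against $\Phi=\eta^2 U$ gives the \emph{identity}
\begin{equation*}
\int_{\mathbb{R}^{n+1}_+} t^{1-2\sg}\eta^2|\nabla U|^2 + 2\int_{\mathbb{R}^{n+1}_+} t^{1-2\sg}\eta\, U\, \nabla U\cdot\nabla\eta = \int_{\mathbb{R}^n}\eta^2 u^{p+1},
\end{equation*}
and after Young's inequality and absorption one obtains
\begin{equation*}
\frac12\int_{\mathbb{R}^{n+1}_+} t^{1-2\sg}\eta^2|\nabla U|^2 \le \int_{\mathbb{R}^n}\eta^2 u^{p+1} + \frac{C}{R^2}\int_{\mathcal{B}_{2R}^+\setminus\mathcal{B}_R^+} t^{1-2\sg}U^2.
\end{equation*}
The nonlinear term $\int u^{p+1}$ lands on the right-hand side with a positive sign; it cannot be moved to the left, so this computation provides no bound on $\int u^{p+1}$ and no decay of energy.

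Second, even the correct Serrin-type test-function argument (testing against $\eta^m$ for a suitable power $m$ and running Hölder plus iteration) only closes when $p<\frac{n}{n-2\sg}$: the iterated quantity scales like $R^{\,n-2p/(p-1)}$, which tends to zero only below the Serrin exponent. The range relevant to both the proposition and Theorem~\ref{THM01} is $\frac{n}{n-2\sg}<p<\frac{n+2\sg}{n-2\sg}$, which lies entirely above the Serrin threshold. So the claim that ``the scaling gap makes the iteration in Step~2 converge'' for the subcritical range is backwards --- it is precisely in the range $p\ge\frac{n}{n-2\sg}$ that the soft integral estimate breaks down, and where one must appeal to the moving-plane mechanism. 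Also note that $p=\frac{n+2\sg}{n-2\sg}$ is excluded by the hypothesis, so the discussion of the critical endpoint is moot.

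The genuine content of the result is the Kelvin-transform-plus-moving-plane argument of \cite{J-L-X}: one first establishes decay at infinity by inverting the extension $U$ about a sphere, then moves planes (or spheres) for the degenerate operator $\mathrm{div}(t^{1-2\sg}\nabla\cdot)$ with nonlinear Neumann data to show the trace $u$ is radially symmetric about every point, hence constant, hence identically zero. That argument, not a cutoff estimate, is what covers the whole range $1\le p<\frac{n+2\sg}{n-2\sg}$.
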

The second one is a Harnack inequality, see also \cite{Cab-S}.
\begin{proposition}\label{P2}
Let $U \in W_{loc}^{1,2}(t^{1-2\sg}, \mathcal{B}^+_1)$ be a nonnegative weak solution of
\begin{equation}\label{P11}
\begin{cases}
-\textmd{div}(t^{1-2\sg}  \nabla U)=0~~~~~~~~~& \textmd{in} ~\mathcal{B}^+_1,\\
\frac{\partial U}{\partial \nu^\sg}(x, 0)=a(x)U(x, 0)~~~~~~~~& \textmd{on} ~\partial^0\mathcal{B}^+_1,
\end{cases}
\end{equation}
If $a\in L^q(B_1)$ for some $q > \frac{n}{2\sg}$, then we have
\be\label{P21}
\sup_{\mathcal{B}^+_{1/2}} U \leq C  \inf_{\mathcal{B}^+_{1/2}} U,
\ee
where $C$ depends only on $n, \sg$ and $\|a\|_{L^q(B_1)}$. 
\end{proposition}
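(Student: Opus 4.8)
This Harnack inequality is an instance of De Giorgi--Nash--Moser theory for the degenerate elliptic operator $\operatorname{div}(t^{1-2\sg}\nabla\,\cdot\,)$ with the conormal condition treated as a boundary potential; it is proved in detail in \cite{J-L-X} (see also \cite{Cab-S}), and I sketch the route one would take. The first step is to remove the boundary by even reflection: I would check that $\widetilde U(x,t):=U(x,|t|)$ lies in $W^{1,2}_{\loc}(|t|^{1-2\sg},\mathcal B_1)$ and is a nonnegative weak solution of $-\operatorname{div}(|t|^{1-2\sg}\nabla\widetilde U)=0$ in $\mathcal B_1$ with a boundary source, i.e.
\be
\int_{\mathcal B_1}|t|^{1-2\sg}\,\nabla\widetilde U\cdot\nabla\Phi \;=\; 2\int_{B_1} a(x)\,\widetilde U(x,0)\,\Phi(x,0)
\ee
for all $\Phi\in C_c^\infty(\mathcal B_1)$ even in $t$. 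Since $\sg\in(0,1)$ the weight $|t|^{1-2\sg}$ belongs to the Muckenhoupt class $A_2$ on $\R^{n+1}$, so the Fabes--Kenig--Serapioni theory provides, on subballs of $\mathcal B_1$, a weighted Sobolev inequality with some exponent $\kappa>1$ together with weighted Poincaré inequalities, while the extension theory of \cite{C-S} (equivalently the embedding $H^\sg\hookrightarrow L^{2n/(n-2\sg)}$) provides a trace inequality bounding $\|V(\cdot,0)\|_{L^{2n/(n-2\sg)}(B_R)}$ by the weighted Dirichlet energy of $V$ on $\mathcal B_R^+$ plus a lower-order weighted $L^2$ term. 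These are the only functional-analytic inputs.

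The second step is the local boundedness estimate $\sup_{\mathcal B_{1/2}^+}U\le C\,\|U\|_{L^p_w(\mathcal B_{3/4}^+)}$, valid for every $p>0$, where $L^p_w$ denotes the $t^{1-2\sg}$-weighted space. I would insert test functions $\eta^2 U^{2\bb-1}$ (with $\bb\ge1$ and $\eta$ a cutoff) into the reflected equation to obtain a Caccioppoli inequality
$$\int_{\mathcal B_R^+}\eta^2 t^{1-2\sg}|\nabla U^\bb|^2 \;\lesssim\; \bb^2\!\int_{\mathcal B_R^+}|\nabla\eta|^2 t^{1-2\sg}U^{2\bb} \;+\; \bb\!\int_{B_R}|a|\,\eta^2 U^{2\bb}.$$
The boundary integral is handled by Hölder, $\int_{B_R}|a|\,(\eta U^\bb)^2\le\|a\|_{L^q(B_1)}\|\eta U^\bb\|_{L^{2q'}(B_R)}^2$, and here the hypothesis $q>\tfrac{n}{2\sg}$ is exactly what yields $2q'<\tfrac{2n}{n-2\sg}$; interpolating between $L^2(B_R)$ and $L^{2n/(n-2\sg)}(B_R)$ on the trace and then applying the trace inequality above lets one absorb this term into the weighted Dirichlet energy, up to a lower-order weighted $L^2$ bulk term, with constants depending only on $n$, $\sg$ and $\|a\|_{L^q(B_1)}$. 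Feeding the resulting inequality for $U^\bb$ into the weighted Sobolev inequality and iterating over dyadically shrinking radii gives the $L^2_w\to L^\infty$ bound, and the passage to arbitrary $p>0$ is the standard reverse-Hölder self-improvement argument.

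The third step is the weak Harnack inequality $\|\,\overline U\,\|_{L^{p_0}_w(\mathcal B_{3/4}^+)}\le C\inf_{\mathcal B_{1/2}^+}\overline U$ for some small $p_0>0$, applied to $\overline U:=U+\e$ with $\e>0$ arbitrary. This comes from the analogous iteration with negative exponents (test functions $\eta^2\overline U^{-2\bb-1}$), together with a John--Nirenberg/BMO estimate on $\log\overline U$; the latter I would get by testing with $\eta^2\overline U^{-1}$ to bound the weighted energy of $\nabla\log\overline U$ and then invoking the weighted Poincaré inequality. Combining the $L^\infty$-bound of the second step (with $p=p_0$) with this weak Harnack inequality through the abstract Bombieri--Giusti lemma — which is what actually bridges the two — yields $\sup_{\mathcal B_{1/2}^+}\overline U\le C\inf_{\mathcal B_{1/2}^+}\overline U$, and letting $\e\to0$ finishes the proof, with $C$ depending only on $n$, $\sg$ and $\|a\|_{L^q(B_1)}$. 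The hard part throughout is the boundary potential term $\int_{B_R}|a|\,(\cdot)$ in the iteration: absorbing it into the Dirichlet energy is precisely where the condition $q>\tfrac{n}{2\sg}$ enters, and arranging that every constant depends only on $\|a\|_{L^q(B_1)}$ (and not on finer properties of $a$) requires the interpolation-and-absorption bookkeeping to be carried out with care.
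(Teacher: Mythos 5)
The paper gives no proof of this proposition; it is recalled from \cite{J-L-X} (see also \cite{Cab-S}), and the Moser-iteration scheme you sketch --- even reflection to an $A_2$-weighted equation, Fabes--Kenig--Serapioni Sobolev/Poincar\'e inequalities plus the trace inequality, absorption of the boundary potential via $q>\frac{n}{2\sg}$, local boundedness, weak Harnack with a John--Nirenberg estimate on $\log\overline U$, and the Bombieri--Giusti lemma --- is precisely the argument carried out in those references. Your proposal is correct and follows essentially the same route as the cited proof.
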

The last one is a Sobolev type inequality.
\begin{proposition}\label{P3}
Let $D=\Omega \times (0, R) \subset \mathbb{R}^n \times \mathbb{R}_+$ with $R>0$ and $\partial \Omega$ Lipschitz.  Then there exists $C_{n, \sg}>0$ depending only on $n$ and $\sg$ such that
$$
\|U(\cdot, 0)\|_{L^{2n / (n-2\sg)}(\Omega)} \leq C_{n,\sg} \|\nabla U\|_{L^2(t^{1-2\sg}, D)}
$$ 
for all $U \in C_c^\infty (D \cup \partial^0 D)$.

\end{proposition}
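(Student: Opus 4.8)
The plan is to reduce the inequality on the cylinder $D$ to the trace--Sobolev inequality on the whole half-space $\mathbb{R}^{n+1}_+$ by extending $U$ by zero, which automatically makes the constant depend only on $n$ and $\sg$. Since $U \in C_c^\infty(D \cup \partial^0 D)$ and $D \cup \partial^0 D = \Omega \times [0, R)$, the support of $U$ lies in $K \times [0, r]$ for some compact $K \subset \Omega$ and some $r < R$; hence the extension $\widetilde U$ of $U$ by zero to $\overline{\mathbb{R}^{n+1}_+}$ is again smooth with compact support. Its weighted Dirichlet energy is unchanged, $\int_{\mathbb{R}^{n+1}_+} t^{1-2\sg}|\nabla \widetilde U|^2 = \int_D t^{1-2\sg}|\nabla U|^2$, and this integral is finite because $\nabla U$ is bounded and $1 - 2\sg > -1$; moreover $t^{1-2\sg}$ is a Muckenhoupt $A_2$ weight (since $1 - 2\sg \in (-1, 1)$), so $\widetilde U$ belongs to $W^{1,2}(t^{1-2\sg}, \mathcal{B}_\rho^+)$ for every $\rho > 0$ and has a well-defined trace $u := \widetilde U(\cdot, 0) = U(\cdot, 0)$, which is supported in $K \subset \Omega$. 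Therefore $\|u\|_{L^{2n/(n-2\sg)}(\Omega)} = \|u\|_{L^{2n/(n-2\sg)}(\mathbb{R}^n)}$ and $\|\nabla \widetilde U\|_{L^2(t^{1-2\sg}, \mathbb{R}^{n+1}_+)} = \|\nabla U\|_{L^2(t^{1-2\sg}, D)}$, so it suffices to prove the estimate on all of $\mathbb{R}^{n+1}_+$ with a constant depending only on $n$ and $\sg$.

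For the half-space inequality I would chain three standard facts. First, the Caffarelli--Silvestre extension $W = P_\sg[u]$ of $u$ is, by the Dirichlet principle for the degenerate operator $-\operatorname{div}(t^{1-2\sg}\nabla\,\cdot\,)$, the minimizer of $\int_{\mathbb{R}^{n+1}_+} t^{1-2\sg}|\nabla V|^2$ among all $V$ in the weighted energy space with trace $u$; since $\widetilde U$ is such a competitor, $\int_{\mathbb{R}^{n+1}_+} t^{1-2\sg}|\nabla W|^2 \leq \int_{\mathbb{R}^{n+1}_+} t^{1-2\sg}|\nabla \widetilde U|^2$. Second, the Caffarelli--Silvestre identity gives $\int_{\mathbb{R}^{n+1}_+} t^{1-2\sg}|\nabla W|^2 = d_\sg \int_{\mathbb{R}^n}|\xi|^{2\sg}|\widehat u(\xi)|^2\,d\xi = d_\sg\,\|(-\Delta)^{\sg/2} u\|_{L^2(\mathbb{R}^n)}^2$ for an explicit constant $d_\sg > 0$. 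Third, the sharp fractional Sobolev inequality on $\mathbb{R}^n$ yields $\|u\|_{L^{2n/(n-2\sg)}(\mathbb{R}^n)} \leq S_{n,\sg}\,\|(-\Delta)^{\sg/2} u\|_{L^2(\mathbb{R}^n)}$, which applies since $u \in C_c^\infty(\mathbb{R}^n) \subset \dot H^\sg(\mathbb{R}^n)$. Combining the three, $\|u\|_{L^{2n/(n-2\sg)}(\mathbb{R}^n)} \leq S_{n,\sg}\,d_\sg^{-1/2}\,\|\nabla \widetilde U\|_{L^2(t^{1-2\sg}, \mathbb{R}^{n+1}_+)}$, and reading back through the first paragraph gives the assertion with $C_{n,\sg} = S_{n,\sg}\,d_\sg^{-1/2}$.

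I do not expect a genuine obstacle here. The only points that deserve care are that the zero-extension $\widetilde U$ is an admissible element of the weighted energy space with the stated trace (handled above via the $A_2$ property of $t^{1-2\sg}$ and the compactness of $\operatorname{supp} U$), and that the Dirichlet-principle minimality of $W$ and the Fourier identity are applicable to our $u$; both hold because $u \in C_c^\infty(\mathbb{R}^n) \cap \dot H^\sg(\mathbb{R}^n) \cap L^{2n/(n-2\sg)}(\mathbb{R}^n)$, with approximation by Schwartz functions supplied if needed. If one prefers to avoid the extension machinery, an alternative is to prove the trace inequality on $\mathbb{R}^{n+1}_+$ directly within the theory of Sobolev spaces with Muckenhoupt $A_2$ weights: such spaces admit a weighted Sobolev inequality and a weighted trace embedding onto $\{t = 0\}$, and for the weight $t^{1-2\sg}$ the relevant trace exponent is exactly $2n/(n-2\sg)$, so composing the two yields the desired estimate; I would nonetheless use the extension route, as it is shorter and ties the constant $C_{n,\sg}$ to the known sharp ones.
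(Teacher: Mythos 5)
Your argument is correct, and it is essentially the standard proof: the paper itself does not prove Proposition \ref{P3} but defers to \cite{J-L-X}, where the result is obtained exactly by this route --- extend by zero to $\overline{\mathbb{R}^{n+1}_+}$ and invoke the weighted trace--Sobolev inequality on the half-space, which in turn follows from the Dirichlet-principle/Fourier characterization of the Caffarelli--Silvestre extension combined with the sharp fractional Sobolev inequality. No gaps; the only points needing care (smoothness and admissibility of the zero extension, applicability of the extension identity to $u\in C_c^\infty(\mathbb{R}^n)$) are the ones you already address.
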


\section{Classification of Isolated Singularities}
In this section, we investigate the local behavior of nonnegative solutions of \eqref{Iso3} and classify their isolated singularities. We first prove an upper bound and a special form of Harnack inequality for nonnegative solutions with a possible isolated singularity. We remark that this result will be of basic importance in classifying the isolated singularities.
\begin{proposition}\label{P-301}
Let $U$ be a nonnegative solution of \eqref{Iso3}, with $1 < p < \frac{n+2\sg}{n-2\sg}$. Then,
\begin{itemize}
\item [(1)]   there exists a positive constant $c$ independent of $U$ such that
\be\label{P-3011}
u(x) \leq c |x|^{-\frac{2\sg}{p-1}}~~~~~~~~~~ \textmd{in} ~ B_{1/2};
\ee
\item [(2)]  (Harnack inequality)
for all $0 < r < 1/8$, we have 
\be\label{P-3012}
\sup_{\mathcal{B}_{2r}^+ \backslash \overline{\mathcal{B}}_{r/2}^+} U \leq C \inf_{\mathcal{B}_{2r}^+ \backslash \overline{\mathcal{B}}_{r/2}^+} U,
\ee
where $C$ is a positive constant independent of $r$ and $U$.  In particular, for all $0 < r < 1/8$, we have 
\be\label{P-3013}
\sup_{\partial^+\mathcal{B}_{r}^+ } U \leq C \inf_{\partial^+\mathcal{B}_{r}^+ } U,
\ee
where $C$ is a positive constant independent of $r$ and $U$.
\end{itemize}
\end{proposition}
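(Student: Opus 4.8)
The plan is to prove the upper bound (1) by a blow-up argument combined with the Liouville theorem (Proposition 2.1), and then deduce the Harnack inequalities (2) from (1) together with the standard Harnack inequality (Proposition 2.2) applied on dyadic annuli.

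For part (1), I would argue by contradiction. Suppose \eqref{P-3011} fails; then there is a sequence of nonnegative solutions $U_k$ of \eqref{Iso3} and points $x_k \in B_{1/2}$ with
\be
u_k(x_k)\,|x_k|^{\frac{2\sg}{p-1}} \to \infty.
\ee
A doubling/maximum-type selection (in the spirit of the Schoen--type argument) lets me replace $x_k$ by a point $\bar x_k$ where the weighted quantity $\big(\tfrac{1}{2}-|x|\big)^{\frac{2\sg}{p-1}} u_k(x)$ attains (close to) its maximum, so that after setting $\la_k := u_k(\bar x_k)^{-(p-1)/(2\sg)} \to 0$ the rescaled functions
\be
V_k(X) := \la_k^{\frac{2\sg}{p-1}}\, U_k(\bar x_k + \la_k X)
\ee
satisfy the same extension equation on larger and larger half-balls, are bounded on compact sets (by the doubling choice), and have $V_k(0,0)=1$. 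By the regularity theory of \cite{J-L-X} and the Harnack inequality \eqref{P21}, a subsequence converges (locally uniformly, and in the appropriate weighted Sobolev sense) to a nonnegative solution $V$ of the global problem \eqref{P11} on $\mathbb{R}^{n+1}_+$ with $V(0,0)=1$. Since $1<p<\frac{n+2\sg}{n-2\sg}$, Proposition 2.1 forces $V\equiv 0$, contradicting $V(0,0)=1$. The main obstacle here is the selection of $\bar x_k$ and the verification that the rescaled equation is preserved with the correct scaling of the nonlinear boundary condition — one must check that $\la_k^{\frac{2\sg}{p-1}} U^p$ rescales to $V^p$, which is exactly why the exponent $-\frac{2\sg}{p-1}$ appears — and the passage to the limit, which requires uniform local bounds away from and up to the boundary $\{t=0\}$.

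For part (2), fix $0<r<1/8$ and rescale: set $W(X) := r^{\frac{2\sg}{p-1}} U(rX)$ on, say, $\mathcal{B}_4^+\setminus\overline{\mathcal{B}}_{1/4}^+$. Then $W$ solves $-\mathrm{div}(t^{1-2\sg}\nabla W)=0$ with boundary condition $\frac{\partial W}{\partial\nu^\sg}=W^p = a(x)W$ on the annular flat piece, where $a(x) := W(x,0)^{p-1} = r^{2\sg}\, U(rx,0)^{p-1}$. By the upper bound (1), $U(rx,0)\le c\,|rx|^{-\frac{2\sg}{p-1}}$ on this region, hence $a(x) \le c^{p-1}\,|x|^{-2\sg}$ there, so $a$ is bounded on the fixed annulus $\{1/4\le|x|\le 4\}$ uniformly in $r$; in particular $\|a\|_{L^q}$ is controlled for any $q$. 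Covering the annulus $\mathcal{B}_2^+\setminus\overline{\mathcal{B}}_{1/2}^+$ by finitely many half-balls (and half-balls centered on $\{t=0\}$) on which Proposition 2.2 applies, and chaining the resulting local Harnack inequalities, yields \eqref{P-3012} with a constant independent of $r$ and $U$. Finally \eqref{P-3013} is the restriction of \eqref{P-3012} to the spherical part $\partial^+\mathcal{B}_r^+$, which lies inside $\mathcal{B}_{2r}^+\setminus\overline{\mathcal{B}}_{r/2}^+$. The only delicate point in part (2) is making sure the finite covering and the number of chaining steps do not depend on $r$ — which holds precisely because after rescaling everything lives on a fixed annulus.
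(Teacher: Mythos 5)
Your overall strategy — blow-up combined with the Liouville theorem for the upper bound, then a rescaled Harnack argument on dyadic annuli — is exactly the paper's strategy, and part (2) as you sketch it is correct. There is, however, a genuine gap in your doubling selection for part (1). You propose to maximize $\bigl(\tfrac{1}{2}-|x|\bigr)^{\frac{2\sg}{p-1}} u_k(x)$, but this weight measures distance to the outer boundary $\partial B_{1/2}$ and does not vanish at the singularity. Consequently its supremum over $B_{1/2}\setminus\{0\}$ can be $+\infty$ (the solution is allowed to blow up at the origin), so there need be no maximizer $\bar x_k$ at all. Worse, even if the supremum were finite, this weight would control $u_k$ only near $\partial B_{1/2}$, which is not the estimate $u_k(x)\le c\,|x|^{-2\sg/(p-1)}$ you want near the origin: the relevant distance is to the singular point, not to the outer boundary.

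The fix — and what the paper does — is to localize to a ball that stays away from the singularity: set $v_k(x)=\bigl(\tfrac{|x_k|}{2}-|x-x_k|\bigr)^{\frac{2\sg}{p-1}} u_k(x)$ on $B_{|x_k|/2}(x_k)$ (on this ball $|x|>|x_k|/2>0$, so $u_k$ is continuous and the supremum is attained), and note that the contradiction hypothesis gives $v_k(x_k)=(|x_k|/2)^{2\sg/(p-1)}u_k(x_k)\to\infty$. From the maximizer $\bar x_k$ and $2\mu_k:=\tfrac{|x_k|}{2}-|\bar x_k-x_k|$ one reads off the uniform bound $w_k\le 2^{2\sg/(p-1)}$ on $B_{R_k}$ with $R_k=\mu_k u_k(\bar x_k)^{(p-1)/(2\sg)}\to\infty$, together with $w_k(0)=1$; these are precisely the two inputs the Liouville theorem needs. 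With this correction to the choice of weight, your argument coincides with the paper's proof.
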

\begin{proof}
Suppose by contradiction that there exists a   sequence of points $\{x_k\} \subset B_{1/2}$ and a sequence of solutions $\{U_k\}$ of \eqref{Iso3},  such that  
\be\label{Upp01}
|x_k|^{\frac{2\sg}{p-1}} u_k(x_k)\rightarrow  +\infty~~~~~~\textmd{as} ~ k \rightarrow \infty.
\ee
As in \cite{C-J-S-X}, we define 
$$
v_k(x):=\left(\frac{|x_k|}{2} - |x-x_k|\right)^{\frac{2\sg}{p-1}} u_k(x),~~~~~|x - x_k| \leq \frac{|x_k|}{2}.
$$
Take $\bar{x}_k$ satisfy $|\bar{x}_k - x_k| < \frac{|x_k|}{2}$ and 
$$
v_k(\bar{x}_k) = \max_{|x-x_k|\leq\frac{|x_k|}{2}} v_k(x).
$$
Let 
$$
2\mu_k:=\frac{|x_k|}{2} - |\bar{x}_k-x_k|.
$$
Then
$$
0 < 2\mu_k \leq \frac{|x_k|}{2}~~~ \textmd{and}~~~\frac{|x_k|}{2} - |x-x_k|\geq \mu_k~~~~~\forall ~~  |x-\bar{x}_k| \leq \mu_k.
$$
By the definition of $v_k$, we have
$$
(2\mu_k)^{\frac{2\sg}{p-1}}u_k(\bar{x}_k) = v_k(\bar{x}_k) \geq v_k(x) \geq (\mu_k)^{\frac{2\sg}{p-1}} u_k(x)~~~~~\forall ~~  |x-\bar{x}_k| \leq \mu_k.
$$
Hence, we obtain
\be\label{Upp02}
2^{\frac{2\sg}{p-1}}u_k(\bar{x}_k) \geq u_k(x)~~~~~\forall ~~  |x-\bar{x}_k| \leq \mu_k. 
\ee
Moreover, by \eqref{Upp01}, we also have
\be\label{Upp03}
(2\mu_k)^{\frac{2\sg}{p-1}}u_k(\bar{x}_k) = v_k(\bar{x}_k) \geq v_k(x_k) =\left(\frac{|x_k|}{2}\right)^{\frac{2\sg}{p-1}} u_k(x_k)\rightarrow +\infty~~~~ \textmd{as} ~ k\rightarrow \infty.
\ee
Now, we define
$$
W_k(y, t):=\frac{1}{u_k(\bar{x}_k)} U_k\left(\bar{x}_k + \frac{y}{u_k(\bar{x}_k)^{\frac{p-1}{2\sg}}}, \frac{t}{u_k(\bar{x}_k)^{\frac{p-1}{2\sg}}}\right),
~~~~~ (y, t)\in \Omega_k,
$$
where
$$
\Omega_k:=\left\{(y, t)\in \mathbb{R}^{n+1}_+ |     \left(\bar{x}_k + \frac{y}{u_k(\bar{x}_k)^{\frac{p-1}{2\sg}}}, \frac{t}{u_k(\bar{x}_k)^{\frac{p-1}{2\sg}}} \right)    \in \mathcal{B}_1^+ \backslash \{0\}\right\}.
$$
Let $w_k(y):=W_k(y, 0)$. Then $W_k$ satisfies $w_k(0)=1$ and
\begin{equation}\label{Upp04}
\begin{cases}
-\textmd{div}(t^{1-2\sg}  \nabla W_k)=0~~~~~~~~~& \textmd{in} ~\Omega_k,\\
\frac{\partial W_k}{\partial \nu^\sg}(x, 0)=w_k^p~~~~~~~~& \textmd{on} ~\partial^0\Omega_k.
\end{cases}
\end{equation}
Furthermore, by \eqref{Upp02} and \eqref{Upp03}, we have
$$
w_k(y) \leq 2^{\frac{2\sg}{p-1}}~~~~~~~ \textmd{in} ~ B_{R_k}
$$
with
$$
R_k:=\mu_k u(\bar{x}_k)^{\frac{p-1}{2\sg}}\rightarrow +\infty~~~~ \textmd{as} ~ k \rightarrow \infty.
$$
By Proposition \ref{P2}, for any $T>0$, we have
$$
0 \leq W_k \leq C(T)~~~~~ \textmd{in} ~ B_{R_k/2} \times [0, T),
$$
where the constant $C(T)$ depends only on $n, \sg$ and $T$. By Corollary 2.10 and Theorem 2.15 in \cite{J-L-X} there exists $\alpha > 0$ such that for every $R>1$,
$$
\|W_k\|_{W^{1,2}(t^{1-2\sg}, \mathcal{B}_R^+)} + \|W_k\|_{C^\alpha(\overline{\mathcal{B}}_R^+)} + \|w_k\|_{C^{2, \alpha}(\overline{B}_R)} \leq C(R),
$$
where $C(R)$ is independent of $k$. Therefore,  there is a subsequence of $k\rightarrow \infty$, still denoted by itself,  and a nonnegative function $W\in W_{loc}^{1,2}(t^{1-2\sg}, \overline{\mathbb{R}^{n+1}_+}) \cap C_{loc}^\alpha(\overline{\mathbb{R}^{n+1}_+})$ such that, as $k\rightarrow \infty$,
$$
\begin{cases}
W_k \rightarrow W~~~~~~~\textmd{weakly} ~ \textmd{in} ~~~~~~~~~~&W_{loc}^{1,2}(t^{1-2\sg}, \overline{\mathbb{R}^{n+1}_+})  ,\\
W_k \rightarrow W~~~~~~~  \textmd{in}~~~~~~~&C_{loc}^{\alpha/2}(\overline{\mathbb{R}^{n+1}_+}),\\
w_k\rightarrow w~~~~~~~~~\textmd{in}~~~~~~~&C_{loc}^2(\mathbb{R}^n),
\end{cases}
$$
where $w(y)=W(y,0)$.  Moreover, $W$ satisfies $w(0)=1$ and 
\begin{equation}\label{Upp051}
\begin{cases}
-\textmd{div}(t^{1-2\sg}  \nabla W)=0~~~~~~~~~& \textmd{in} ~\mathbb{R}^{n+1}_+,\\
\frac{\partial W}{\partial \nu^\sg}(x, 0)=w^p~~~~~~~~& \textmd{on} ~\mathbb{R}^n.
\end{cases}
\end{equation}
This contradicts Proposition \ref{P1} and proves part (1) of the proposition.

Now we prove the Harnack inequality, which is actually a consequence of the upper bound \eqref{P-3011}. 
Let 
$$
V_r(X)=U(rX)
$$
for each $r\in (0, \frac{1}{8}]$ and for $\frac{1}{4} \leq |X| \leq 4$. Obviously, $V_r$ satisfies
\begin{equation}\label{Upp061}
\begin{cases}
-\textmd{div}(t^{1-2\sg}  \nabla V_r)=0~~~~~~~~~& \textmd{in} ~\mathcal{B}_4 \backslash \overline{\mathcal{B}}_{1/4},\\
\frac{\partial V_r}{\partial \nu^\sg}(x, 0)=a_r(x)v_r(x)~~~~~~~~& \textmd{on} ~B_4 \backslash \overline{B}_{1/4},\\
\end{cases}
\end{equation}
where $v_r(x)=V_r(x,0)$ and $a_r(x)=r^{2\sg} \left( u(rx) \right)^{p-1}$.  It follows \eqref{P-3011} that
$$
|a_r(x)| \leq C ~~~~~~~\textmd{for} ~ \text{all} ~1/4 \leq |x| \leq 4,
$$
where $C$ is a positive constant independent of $r$ and $U$.  By the Harnack inequality in Proposition \ref{P2} and the standard Harnack inequality for uniformly elliptic equations, we have
$$
\sup_{\frac{1}{2} \leq |X| \leq 2} V_r(X)  \leq C \inf_{\frac{1}{2}\leq |X| \leq 2} V_r(X),
$$
where $C$ is another positive constant independent of $r$ and $U$.  Hence, we get \eqref{P-3012}. 
\end{proof}

In order to prove the lower bound in \eqref{Be01}, we need to establish a monotonicity formula for the nonnegative solutions $U$  of \eqref{Iso3}.  More precisely, take  a nonnegative solutions  $U$  of \eqref{Iso3}, let $0 < r < 1$  and  define
$$
\aligned
E(r;U):= &  r^{2\frac{(p+1)\sg}{p-1} - n }\left[ r \int_{\partial^+\mathcal{B}_r^+} t^{1-2\sg} | \frac{\partial U}{\partial \nu} |^2 + \frac{2\sg}{p-1} \int_{\partial^+\mathcal{B}_r^+} t^{1-2\sg}\frac{\partial U}{\partial \nu} U\right] \\
& + \frac{1}{2}\frac{2\sg}{p-1}\left(\frac{4\sg}{p-1} - (n-2\sg)\right) r^{2\frac{(p+1)\sg}{p-1} - n -1}  \int_{\partial^+\mathcal{B}_r^+} t^{1-2\sg} U^2  \\
& -  r^{2\frac{(p+1)\sg}{p-1} - n+1 }  \left[ \frac{1}{2} \int_{\partial^+\mathcal{B}_r^+} t^{1-2\sg} |\nabla U|^2 -\frac{1}{p+1}\int_{\partial B_r} u^{p+1} \right].
\endaligned
$$
Then, we have the following monotonicity formula.  
\begin{proposition}\label{P302}
Let $U$ be a nonnegative solution of \eqref{Iso3} with $1 < p < \frac{n+2\sg}{n-2\sg}$. Then, $E(r; U)$ is non-decreasing in $r \in (0, 1)$. 
\end{proposition}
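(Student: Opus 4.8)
The plan is to differentiate $E(r;U)$ in $r$ and show the derivative is nonnegative by reducing everything to a boundary integral over $\partial^+\mathcal{B}_r^+$ and then invoking a spectral (Cauchy–Schwarz type) inequality on the half-sphere. Write $U$ in polar coordinates $X = \rho\,\theta$ with $\rho = |X|$ and $\theta \in S^n_+ := S^n \cap \mathbb{R}^{n+1}_+$, and set $v(x) = U(x,0)$. Introduce the Emden–Fowler / Fowler-type rescaling $w(s,\theta) = \rho^{\frac{2\sg}{p-1}} U(\rho\theta)$ with $s = -\log\rho$ (or $s = \log\rho$, fixing orientation later). Under this change of variables the degenerate-elliptic equation $-\mathrm{div}(t^{1-2\sg}\nabla U)=0$ on the cone, together with the nonlinear Neumann condition $\partial_{\nu^\sg}U = U^p$, transforms into an autonomous equation on the cylinder $\mathbb{R}\times S^n_+$ of the schematic form $-\mathrm{div}_{\theta}(\bar t^{1-2\sg}\nabla_\theta w) + \bar t^{1-2\sg}(\partial_{ss}w + c_1\partial_s w - c_2 w) = 0$ in the interior, with $\partial_{\nu^\sg}w = w^p - c_3 w$ on the boundary $\{\bar t = 0\}$, where $\bar t = t/\rho$ and the constants $c_1,c_2,c_3$ depend only on $n,\sg,p$. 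The point of the prefactor $r^{2\frac{(p+1)\sg}{p-1}-n}$ (and its shifts by $1$) in the definition of $E$ is exactly to absorb the Jacobian factors so that $E(r;U)$ becomes, up to a constant, the natural "energy" functional associated with this autonomous cylinder equation evaluated on the slice $\{s = -\log r\}$:
\[
E(r;U) = \tfrac12\int_{S^n_+}\bar t^{1-2\sg}|\partial_s w|^2 - \tfrac12\int_{S^n_+}\bar t^{1-2\sg}|\nabla_\theta w|^2 - \tfrac{c_2}{2}\int_{S^n_+}\bar t^{1-2\sg}w^2 + \tfrac{1}{p+1}\int_{S^{n-1}}w^{p+1},
\]
(with the slice at $s=-\log r$, and coefficients to be matched carefully). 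So the first and main step is this coordinate computation identifying $E$ with a cylinder energy.

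The second step is to differentiate this expression in $s$ and use the equation to see that the cross terms collapse: $\frac{d}{ds}E$ picks up $\int \bar t^{1-2\sg}\partial_s w\,\partial_{ss}w$, $\int \bar t^{1-2\sg}\nabla_\theta w\cdot\nabla_\theta\partial_s w$, $c_2\int\bar t^{1-2\sg}w\partial_s w$, and $\int_{\partial} w^p\partial_s w$; integrating by parts in $\theta$ on the middle term and substituting the autonomous equation (which precisely relates $\partial_{ss}w$, $\Delta_\theta$-type terms, $w$, and the boundary flux $w^p$) causes massive cancellation, leaving $\frac{d}{ds}E = -c_1\int_{S^n_+}\bar t^{1-2\sg}|\partial_s w|^2 \le 0$, where $c_1$ is the damping coefficient from the conformal change — and one checks $c_1 \ge 0$ precisely in the range $\frac{n}{n-2\sg} < p$, or rather one checks the sign of $c_1$ works out under $1<p<\frac{n+2\sg}{n-2\sg}$ together with the orientation $s = -\log r$, so that $E$ is nondecreasing in $r$. (The fact that $c_1$ has the right sign throughout the stated $p$-range, rather than needing the subcritical restriction only for the upper bound, is something to verify carefully; if it in fact needs $p > \frac{n}{n-2\sg}$ the hypothesis is available.) This Pohozaev-style cancellation, done correctly in the weighted setting with the $t^{1-2\sg}$ measure and the degenerate boundary behavior, is the technical heart.

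The third step is to take care of regularity and the integration-by-parts justification: $U$ is only $W^{1,2}(t^{1-2\sg})$ and $C^\alpha$ near $0$, and the sphere $\partial^+\mathcal{B}_r^+$ meets the degenerate hyperplane $\{t=0\}$, so the boundary terms $\int_{\partial^+\mathcal{B}_r^+}t^{1-2\sg}|\nabla U|^2$ etc. must be interpreted and manipulated with care. I would handle this by first establishing the formula for almost every $r$ using that by Fubini the restriction of $U$ to $\partial^+\mathcal{B}_r^+$ lies in the appropriate weighted $W^{1,2}$ for a.e.\ $r$, doing all computations on the smooth part $\partial^+\mathcal{B}_r^+\cap\{t>\e\}$ and passing $\e\to 0$ using the local regularity estimates from \cite{J-L-X} (interior $C^{2,\alpha}$ for the trace, $C^\alpha$ up to the boundary for $U$) combined with the known sharp behavior $|\nabla U|\lesssim |X|^{-\frac{2\sg}{p-1}-1}$ that follows from the upper bound \eqref{P-3011} and Proposition~\ref{P2}; the weight $t^{1-2\sg}$ is integrable near $t=0$ since $1-2\sg>-1$. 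Finally, monotonicity for a.e.\ $r$ upgrades to monotonicity for all $r\in(0,1)$ because $E(r;U)$ is continuous in $r$ (again by the regularity estimates). The main obstacle I anticipate is getting the conformal change of variables and the ensuing Pohozaev cancellation exactly right — tracking all the constants $c_1,c_2,c_3$ and confirming the decisive term has the correct sign — rather than the (routine but fiddly) approximation arguments in the last step.
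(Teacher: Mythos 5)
Your approach is essentially the same as the paper's: pass to the Emden--Fowler cylindrical variables $V(s,\theta)=r^{2\sg/(p-1)}U(r\theta)$ with $s=\ln r$, recognize $E(r;U)$ as the rescaled cylinder energy $\widetilde E(s)$, multiply the transformed equation by $V_s$, and integrate over $\mathbb{S}^n_+$ to find $\frac{d}{ds}\widetilde E(s)=J_1\int_{\mathbb{S}^n_+}\theta_1^{1-2\sg}(V_s)^2$. Two small points worth resolving in your sketch: the boundary condition after the Fowler rescaling homogenizes exactly, i.e. your $c_3=0$ (the nonlinear Neumann condition becomes $-\lim_{\theta_1\to 0^+}\theta_1^{1-2\sg}\partial_{\theta_1}V=V^p$ with no linear correction, by the choice of exponent $2\sg/(p-1)$); and the damping coefficient is $c_1=J_1=\frac{4\sg}{p-1}-(n-2\sg)$, which is $>0$ precisely when $p<\frac{n+2\sg}{n-2\sg}$ (the lower bound $p>1$ is not used for the sign), so with $s=\ln r$ increasing in $r$ the monotonicity holds on all of $(0,1)$ as claimed.
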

\bp
Take standard polar coordinates in $\mathbb{R}^{n+1}_+$:  $X=(x,t)=r\theta$, where $r=|X|$ and $\theta=\frac{X}{|X|}$.  Let $\theta_1=\frac{t}{|X|}$ denote the component of $\theta$ in the $t$ direction and 
$$\mathbb{S}^n_+=\{X\in \mathbb{R}^{n+1}_+ : r=1, \theta_1 >0\}$$
denote the upper unit half-sphere.
 
We use the classical change of variable in Fowler \cite{F},
$$
V(s, \theta)= r^{\frac{2\sg}{p-1}} U(r, \theta),~~~~~ s=\ln r.
$$
Direct  calculations show that $V$ satisfies
\begin{equation}\label{P30111}
\begin{cases}
V_{ss} - J_1 V_s - J_2V + \theta_1^{2\sg-1} \text{div}_\theta (\theta_1^{1-2\sg} \nabla_\theta V)=0~~~~~~~~~& \textmd{in} ~(-\infty, 0) \times\mathbb{S}^n_+ ,\\
-\lim_{\theta_1\rightarrow 0^+} \theta_1^{1-2\sg} \partial_{\theta_1} V = V^p~~~~~~~~& \textmd{on} ~(-\infty, 0) \times \partial \mathbb{S}^n_+,\\
\end{cases}
\end{equation}
where 
$$
J_1=\frac{4\sg}{p-1} -(N-2\sg),~~~~~
J_2=\frac{2\sg}{p-1}\left(n-2\sg-\frac{2\sg}{p-1}\right).
$$
Multiplying \eqref{P30111} by $V_s$ and integrating, we have
\begin{equation}\label{P30122}
\aligned
& \int_{\mathbb{S}^n_+}\theta_1^{1-2\sg}V_{ss}V_s   - J_2\int_{\mathbb{S}^n_+}\theta_1^{1-2\sg}VV_s -\int_{\mathbb{S}^n_+}\theta_1^{1-2\sg}\nabla_\theta V \cdot \nabla_\theta V_s + \int_{\partial \mathbb{S}^n_+}V^pV_s\\
& = J_1 \int_{\mathbb{S}^n_+}\theta_1^{1-2\sg}(V_s)^2.
\endaligned
\end{equation}
For any $s\in (-\infty, 0)$, we define 
$$
\aligned
\widetilde{E}(s):=   &   \frac{1}{2}\int_{\mathbb{S}^n_+}\theta_1^{1-2\sg} (V_s)^2 -\frac{J_2}{2}\int_{\mathbb{S}^n_+}\theta_1^{1-2\sg} V^2 - \frac{1}{2}\int_{\mathbb{S}^n_+}\theta_1^{1-2\sg} |\nabla_\theta V|^2 \\
&   +\frac{1}{p+1}\int_{\partial \mathbb{S}^n_+} V^{p+1}.
\endaligned
$$
Then, by \eqref{P30122}, we get
\begin{equation}\label{P30133}
\frac{d}{ds} \widetilde{E}(s)= J_1 \int_{\mathbb{S}^n_+}\theta_1^{1-2\sg}(V_s)^2\geq 0.
\end{equation}
Here we have used the fact $J_1 > 0$ because $1< p < \frac{n+2\sg}{n-2\sg}$.  Hence, $\widetilde{E}(s)$ is non-decreasing in $s\in (-\infty, 0)$.

Now, rescaling back,  we have
$$
\aligned
& \int_{\mathbb{S}^n_+}\theta_1^{1-2\sg} (V_s)^2  \\
&  = \int_{\mathbb{S}^n_+}\theta_1^{1-2\sg} \left(\frac{2\sg}{p-1} r^{\frac{2\sg}{p-1}-1}U + r^{\frac{2\sg}{p-1}} U_r\right)^2r^2\\
& = r^{2\frac{(p+1)\sg}{p-1} -n}\int_{\partial^+\mathcal{B}_r^+} t^{1-2\sg}\left(\frac{4\sg^2}{(p-1)^2}  r^{-1}U^2 +\frac{4\sg}{p-1} U \frac{\partial U}{\partial \nu} +r |\frac{\partial U}{\partial \nu}|^2\right),
\endaligned
$$
$$
\int_{\mathbb{S}^n_+}\theta_1^{1-2\sg} |\nabla_\theta V|^2=r^{2\frac{(p+1)\sg}{p-1} -n+1}\int_{\partial^+\mathcal{B}_r^+} t^{1-2\sg} \left( |\nabla U|^2 - |\frac{\partial U}{\partial \nu}|^2 \right),
$$
$$
\int_{\mathbb{S}^n_+}\theta_1^{1-2\sg} V^2=r^{2\frac{(p+1)\sg}{p-1} -n-1}\int_{\partial^+\mathcal{B}_r^+} t^{1-2\sg} U^2,
$$
$$
\int_{\partial \mathbb{S}^n_+} V^{p+1} = r^{2\frac{(p+1)\sg}{p-1} -n+1}\int_{\partial B_r} u^{p+1}.
$$
Substituting these into \eqref{P30133} and noting  that $s=\ln r$ is non-decreasing in $r$, we easily obtain that $E(r;U)$ is also non-decreasing in $r \in (0, 1)$.
\ep

By the monotonicity of $E(r; U)$ we prove the following proposition, which will play an essential  role in deriving the lower bound in \eqref{Be01}.

\begin{proposition}\label{P303}
Let $U$ be a nonnegative solution of \eqref{Iso3} with $ \frac{n}{n-2\sg} < p < \frac{n+2\sg}{n-2\sg}$.  If
$$
\liminf_{|x| \rightarrow 0}|x|^{\frac{2\sg}{p-1}} u(x)=0,
$$
then
$$
\lim_{|x| \rightarrow 0}|x|^{\frac{2\sg}{p-1}} u(x)=0. 
$$
\end{proposition}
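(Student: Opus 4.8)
The plan is to argue by contradiction using the monotonicity of $E(r;U)$ together with a blow-down/scaling argument. Suppose $\liminf_{|x|\to 0}|x|^{\frac{2\sg}{p-1}}u(x)=0$ but $\limsup_{|x|\to 0}|x|^{\frac{2\sg}{p-1}}u(x)=:2\eta>0$. By the Harnack inequality \eqref{P-3012} (or its spherical form \eqref{P-3013}), the quantity $m(r):=\inf_{\partial^+\mathcal{B}_r^+}U$ and $M(r):=\sup_{\partial^+\mathcal{B}_r^+}U$ satisfy $M(r)\leq C\,m(r)$, so both the $\liminf=0$ and $\limsup>0$ assumptions transfer to $r^{\frac{2\sg}{p-1}}m(r)$ oscillating between $0$ and a positive value along sequences $r_j\to 0$ and $\rho_j\to 0$. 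In terms of the Fowler variable $V(s,\theta)=r^{\frac{2\sg}{p-1}}U(r,\theta)$ with $s=\ln r$, this says $\|V(s,\cdot)\|_{L^\infty(\mathbb{S}^n_+)}$ does not tend to $0$ as $s\to-\infty$ but does return arbitrarily close to $0$ along a sequence $s_j\to-\infty$.

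First I would exploit the upper bound \eqref{P-3011}: it gives $0\leq V(s,\theta)\leq C$ uniformly for all $s<0$, and then interior elliptic estimates for the degenerate operator $\mathrm{div}(\theta_1^{1-2\sg}\nabla\cdot)$ (the same estimates from \cite{J-L-X} used in Proposition \ref{P-301}) give uniform $C^\alpha$ and $W^{1,2}(\theta_1^{1-2\sg})$ bounds for $V$ on any $s$-slab $[s-1,s+1]\times\mathbb{S}^n_+$, independent of $s$. Hence along any sequence $s\to-\infty$, translates $V(\cdot+s_j,\cdot)$ subconverge to a bounded global solution $V_\infty$ of the autonomous system \eqref{P30111} on $\mathbb{R}\times\mathbb{S}^n_+$. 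Next, the key point: since $\widetilde{E}(s)$ is non-decreasing and, by the uniform bounds, bounded above as $s\to 0^-$, the limit $\widetilde{E}(-\infty):=\lim_{s\to-\infty}\widetilde{E}(s)$ exists and is finite; consequently $\int_{-\infty}^{0}\int_{\mathbb{S}^n_+}\theta_1^{1-2\sg}(V_s)^2\,ds<\infty$ by \eqref{P30133} (recall $J_1>0$). Therefore any blow-down limit $V_\infty$ is $s$-independent, i.e. $V_\infty=V_\infty(\theta)$ solves the stationary problem
$$
\theta_1^{2\sg-1}\,\mathrm{div}_\theta(\theta_1^{1-2\sg}\nabla_\theta V_\infty)-J_2 V_\infty=0 \ \text{ in }\mathbb{S}^n_+,\qquad -\lim_{\theta_1\to0^+}\theta_1^{1-2\sg}\partial_{\theta_1}V_\infty=V_\infty^p \ \text{ on }\partial\mathbb{S}^n_+.
$$

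Now I would derive the contradiction as follows. Applying the blow-down along the sequence $s_j$ where $\|V(s_j,\cdot)\|_\infty\to 0$, we get a stationary limit $V_\infty\equiv 0$; applying it along a sequence where $\|V(s,\cdot)\|_\infty\geq\eta$ we would get a \emph{nontrivial} nonnegative stationary solution $V_\infty^\sharp$. But $\widetilde{E}$ evaluated on a stationary solution depends only on that solution, so $\widetilde{E}(s_j)\to\widetilde{E}[0]=0$ forces $\widetilde{E}(-\infty)=0$ by monotonicity, and then $\widetilde{E}(s)\to 0$ as $s\to -\infty$ from below — but monotonicity also gives $\widetilde{E}(s)\leq\widetilde{E}[V_\infty^\sharp]$ in the limit along the other sequence, and one checks $\widetilde{E}[V_\infty^\sharp]>0$ for any nontrivial solution (this is where the condition $p>\frac{n}{n-2\sg}$, equivalently $J_2>0$, enters: testing the stationary equation against $V_\infty^\sharp$ gives $\int\theta_1^{1-2\sg}|\nabla_\theta V_\infty^\sharp|^2+J_2\int\theta_1^{1-2\sg}(V_\infty^\sharp)^2=\int_{\partial\mathbb{S}^n_+}(V_\infty^\sharp)^{p+1}$, and combining with the expression for $\widetilde{E}$ shows $\widetilde{E}[V_\infty^\sharp]=(\frac12-\frac1{p+1})\int_{\partial\mathbb{S}^n_+}(V_\infty^\sharp)^{p+1}-\frac{J_2}{2}\int\theta_1^{1-2\sg}(V_\infty^\sharp)^2$; a bit more care, using a Pohozaev/scaling relation on $\mathbb{S}^n_+$ or directly the strict monotonicity, pins down the sign). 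Actually the cleanest route avoids computing $\widetilde{E}[V_\infty^\sharp]$ explicitly: since $\widetilde{E}(s_j)\to 0$ and $\widetilde{E}$ is non-decreasing, $\widetilde{E}(s)\leq 0$ for all $s$ near $-\infty$ along which... more precisely $\widetilde{E}(s)\geq\widetilde{E}(-\infty)=\lim\widetilde{E}(s_j)=0$ and $\widetilde{E}(s)\leq\widetilde{E}(0^-)$, and monotonicity forces $\widetilde{E}\equiv$ const $=0$ only if... — so instead: $\widetilde{E}(-\infty)=0$ and $\widetilde{E}$ nondecreasing imply $\widetilde{E}(s)\geq 0$; but evaluating along a subsequence where $V(s,\cdot)\to V_\infty^\sharp\not\equiv 0$ in $C^1$, and using $\int_s^0\int(V_s)^2\to 0$ to conclude $V_s\to 0$, we get that the limit is stationary and $\widetilde{E}(s)\to\widetilde{E}[V_\infty^\sharp]$, while \emph{also} $\frac{d}{ds}\widetilde{E}=J_1\int\theta_1^{1-2\sg}(V_s)^2$ is integrable and $\to$ the same finite limit $0=\widetilde{E}(-\infty)$, giving $\widetilde{E}[V_\infty^\sharp]=0$; finally a separate argument (maximum principle / Rellich–Pohozaev identity on $\mathbb{S}^n_+$, valid precisely when $J_2>0$) shows the only nonnegative solution of the stationary problem with $\widetilde{E}=0$ is $V_\infty^\sharp\equiv 0$, contradicting $\|V_\infty^\sharp\|_\infty\geq\eta$.

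The main obstacle I anticipate is the last step: ruling out a nontrivial nonnegative stationary solution $V_\infty^\sharp$ on the half-sphere with $\widetilde{E}[V_\infty^\sharp]=0$. This requires a Pohozaev-type identity on $\mathbb{S}^n_+$ for the weighted operator — multiplying the stationary equation by a suitable vector field (the generator of the conformal scaling pulled back to the sphere, matching the Fowler change of variables) — and the sign of the resulting boundary/bulk terms is controlled exactly by the hypothesis $p>\frac{n}{n-2\sg}$, i.e. $J_2>0$. The secondary technical point is making the blow-down compactness rigorous for the degenerate equation, i.e. upgrading weak $W^{1,2}(\theta_1^{1-2\sg})$ convergence to $C^1_{loc}$ convergence of the traces $w$ so that the limiting equation and the vanishing of $V_s$ can be used pointwise; this is handled exactly as in the proof of Proposition \ref{P-301}, via the regularity theory of \cite{J-L-X}.
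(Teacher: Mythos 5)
Your proposal takes a genuinely different route from the paper's, and modulo one computational error it would work. The paper does not pass to the Fowler variable for this step: it works with the spherical average $g(r)=r^{\frac{2\sg}{p-1}}\bar u(r)$, takes a sequence of \emph{local minima} $r_i$ of $g$ with $g(r_i)\to 0$, and rescales by $U(r_ie_1)$ rather than by $r_i^{-\frac{2\sg}{p-1}}$. Because $g(r_i)\to 0$, the coefficient of the nonlinearity in the rescaled equation tends to $0$, so the blow-down limit solves the \emph{linear} problem $\frac{\partial V}{\partial\nu^\sg}=0$, and the B\^ocher theorem of \cite{J-L-X} gives the explicit form $V=a|X|^{-(n-2\sg)}+b$; the critical-point condition $g'(r_i)=0$ plus $V(e_1)=1$ then pins down $a,b$, and one checks $E(r_i;U)\to 0$, so $E\geq 0$ by monotonicity, yet evaluating $E(1;\cdot)$ on the scaled limit produces a strictly negative number. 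Your version keeps the Fowler normalization, extracts two blow-down limits (one trivial, one not), and plays them off against each other through the common value $\widetilde E(-\infty)$. This avoids the B\^ocher classification and the $a,b$ bookkeeping, at the cost of having to classify the stationary half-sphere solutions by energy.

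The gap in your write-up is a sign/cancellation error that makes you think the last step is a genuine obstacle when it is not. Testing the stationary equation against $V_\infty^\sharp$ gives
$$
\int_{\mathbb{S}^n_+}\theta_1^{1-2\sg}|\nabla_\theta V_\infty^\sharp|^2 + J_2\int_{\mathbb{S}^n_+}\theta_1^{1-2\sg}(V_\infty^\sharp)^2 = \int_{\partial\mathbb{S}^n_+}(V_\infty^\sharp)^{p+1},
$$
and substituting into the definition of $\widetilde E$ (with $V_s\equiv 0$) the $J_2$ terms cancel \emph{completely}, leaving
$$
\widetilde E[V_\infty^\sharp]=\Bigl(\tfrac{1}{p+1}-\tfrac{1}{2}\Bigr)\int_{\partial\mathbb{S}^n_+}(V_\infty^\sharp)^{p+1}\ \leq\ 0,
$$
not the expression you wrote (which has the opposite sign on the boundary term and a spurious residual $J_2$ term). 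Since $p>1$ this is strictly negative unless $V_\infty^\sharp$ vanishes on $\partial\mathbb{S}^n_+$, and in that case the same testing identity together with $J_2>0$ (equivalent to $p>\frac{n}{n-2\sg}$, which is exactly where the lower exponent restriction enters) forces $V_\infty^\sharp\equiv 0$. So the chain $\widetilde E[V_\infty^\sharp]=\widetilde E(-\infty)=\widetilde E[0]=0$ already yields $V_\infty^\sharp\equiv 0$, contradicting $\|V_\infty^\sharp\|_\infty\geq\eta$; no Rellich--Pohozaev identity on $\mathbb{S}^n_+$ is needed, and what you flagged as ``the main obstacle'' evaporates. The secondary technical point (uniform compactness of the Fowler translates so that $\widetilde E(\tilde s_j)\to\widetilde E[V_\infty^\sharp]$ and $V_s\to 0$ pointwise on slabs) is indeed handled by the regularity results of \cite{J-L-X} as in Proposition \ref{P-301}, together with the finiteness of $\int_{-\infty}^0\int_{\mathbb{S}^n_+}\theta_1^{1-2\sg}(V_s)^2$ coming from the bounded monotone energy.
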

\bp
Suppose by contradiction that
$$
\liminf_{|x| \rightarrow 0}|x|^{\frac{2\sg}{p-1}} u(x)=0 ~~~~ \textmd{and} ~~~~  \limsup_{|x| \rightarrow 0}|x|^{\frac{2\sg}{p-1}} u(x)=C > 0. 
$$
Therefore, there exist two sequences of points $\{x_i\}$ and $\{y_i\}$ satisfying
$$
x_i \rightarrow 0, ~~~y_i \rightarrow 0~~~~\textmd{as}~ i \rightarrow \infty,              
$$
such that
$$
|x_i|^{\frac{2\sg}{p-1}} u(x_i)\rightarrow 0~~~ \textmd{and}~~~|y_i|^{\frac{2\sg}{p-1}} u(y_i)\rightarrow C>0~~~\textmd{as}~ i \rightarrow \infty.
$$
Let $g(r)=r^{\frac{2\sg}{p-1}} \bar{u}(r)$, where $\bar{u}(r)=\frac{1}{|\partial B_r|}\int_{\partial B_r} u$ denotes the spherical average of $u$ over $\partial B_r$. Then, by the Harnack inequality \eqref{P-3013}, we have
$$
\liminf_{r \rightarrow 0}g(r)=0 ~~~~ \textmd{and} ~~~~  \limsup_{r \rightarrow 0}g(r)=C > 0. 
$$
Hence, there exists a  sequence of local minimum points $r_i$ of $g(r)$ with
$$
\lim_{i\rightarrow \infty} r_i=0 ~~~~ \textmd{and} ~~~~ \lim_{i\rightarrow \infty} g(r_i)=0. 
$$
Let
$$
V_i(X)=\frac{U(r_i X)}{U(r_i e_1)},
$$
where $e_1=(1,0,\cdots,0)$.  It follows  from the Harnack inequality \eqref{P-3012} that $V_i$ is locally uniformly bounded away from the origin and satisfies 
\begin{equation}\label{P303001}
\begin{cases}
-\textmd{div}(t^{1-2\sg}  \nabla V_i)=0~~~~~~~~~& \textmd{in} ~\mathbb{R}^{n+1}_+,\\
\frac{\partial V_i}{\partial \nu^\sg}(x, 0)=\left(r_i^{\frac{2\sg}{p-1}} U(r_ie_1) \right)^{p-1} V_i^p(x, 0)~~~~~~~~& \textmd{on} ~\mathbb{R}^n \backslash \{0\}.
\end{cases}
\end{equation}
Note that by the Harnack inequality \eqref{P-3013}, $r_i^{\frac{2\sg}{p-1}} U(r_ie_1)\rightarrow 0$ as $i\rightarrow \infty$.  Then by Corollary 2.10 and Theorem 2.15  in \cite{J-L-X} that  there exists $\alpha >0$ such that for every $R > 1 > r > 0$
$$
\|V_i\|_{W^{1,2}(t^{1-2\sg}, \mathcal{B}_R^+ \backslash \overline{\mathcal{B}}_r^+)} + \|V_i\|_{C^\alpha (\mathcal{B}_R^+ \backslash \overline{\mathcal{B}}_r^+)} + \|v_i\|_{C^{2,\alpha}(B_R \backslash B_r)} \leq C(R, r),
$$
where $v_i(x)=V_i(x,0)$ and $C(R, r)$ is independent of $i$. Then after passing to a subsequence, $\{V_i\}$ converges to a nonnegative function $V  \in W_{loc}^{1,2}(t^{1-2\sg}, \overline{\mathbb{R}^{n+1}_+} \backslash \{0\}) \cap C^\alpha_{loc}(\overline{\mathbb{R}^{n+1}_+} \backslash \{0\})$ satisfying 
\begin{equation}\label{P303002}
\begin{cases}
-\textmd{div}(t^{1-2\sg}  \nabla V)=0~~~~~~~~~& \textmd{in} ~\mathbb{R}^{n+1}_+,\\
\frac{\partial V}{\partial \nu^\sg}(x, 0)=0~~~~~~~~& \textmd{on} ~\mathbb{R}^n \backslash \{0\}.
\end{cases}
\end{equation}
By a B\^{o}cher type theorem in \cite{J-L-X}, we have
$$
V(X)=\frac{a}{|X|^{n-2\sg}} + b,
$$
where $a, b$ are nonnegative constants.  Recall that $r_i$ are local minimum  of $g(r)$ for every $i$ and note that
$$
r^{\frac{2\sg}{p-1}} \bar{v}_i(r) =r^{\frac{2\sg}{p-1}}\frac{1}{|\partial B_r|} \int_{\partial B_r}v_i=\frac{1}{U(r_ie_1)}r^{\frac{2\sg}{p-1}}\bar{u}(r_i r)=\frac{1}{U(r_ie_1)r_i^{\frac{2\sg}{p-1}}} g(r_i r). 
$$
Hence, for every $i$, we have
\begin{equation}\label{P303003}
\frac{d}{dr} \left[ r^{\frac{2\sg}{p-1}} \bar{v}_i(r)\right] \Bigg|_{r=1} = \frac{r_i}{U(r_ie_1)r_i^{\frac{2\sg}{p-1}}} g^\prime (r_i )=0.
\end{equation}
Let $v(x)=V(x,0)$. Then we know that  $v_i(x) \rightarrow v(x)$ in $C_{loc}^2(\mathbb{R}^n \backslash \{0\})$. By \eqref{P303003}, we obtain 
$$
\frac{d}{dr} \left[ r^{\frac{2\sg}{p-1}} \bar{v}(r)\right] \Bigg|_{r=1} =0,
$$
which implies that
\begin{equation}\label{P303004}
a\left(\frac{2\sg}{p-1} - (n-2\sg)\right) +  \frac{2\sg b}{p-1}=0.
\end{equation}
On the other hand, by  $V(e_1)=1$, we have
\begin{equation}\label{P303005}
a+b=1.
\end{equation}
Combine \eqref{P303004} with \eqref{P303005}, we get
$$
a=\frac{2\sg}{(p-1)(n-2\sg)}~~~~~ \textmd{and} ~~~~~ b=1- \frac{2\sg}{(p-1)(n-2\sg)}.
$$
Since $\frac{n}{n-2\sg} < p$, we have $0 < a, b < 1$. Now we compute $E(r; U)$.

It follows from Proposition 2.19 in \cite{J-L-X} that $|\nabla_x V_i|$ and $|t^{1-2\sg} \partial_t V_i|$ are locally uniformly bounded in $C_{loc}^\beta (\overline{\mathbb{R}^{n+1}_+}\backslash \{0\})$ for some $\beta >0$. Hence,  there exists a  constant $C>0$ such that 
$$
|\nabla_x U(X)| \leq C r_i^{-1} U(r_i e_1)= o(1) r_i^{-\frac{2\sg}{p-1} - 1}~~~~~~ \textmd{for}  ~ \textmd{all} ~|X|=r_i
$$
and
$$
|t^{1-2\sg }\partial_t U(X)| \leq C r_i^{-2\sg} U(r_i e_1)= o(1) r_i^{-\frac{2\sg}{p-1} - 2\sg}~~~~~~ \textmd{for}  ~ \textmd{all} ~|X|=r_i. 
$$
Thus, by a direct computation, we can get 
$$
\lim_{i \rightarrow \infty} E(r_i; U)=0.
$$
By the monotonicity of $E(r; U)$, we obtain
$$
E(r; U) \geq 0~~~~~ \textmd{for} ~ \textmd{all} ~ r\in (0, 1).
$$
On the other hand, by the scaling invariance of $E(r; U)$, for every $i$, we have
$$
0 \leq E(r_i; U)=E\left(1; r_i^{\frac{2\sg}{p-1}} U(r_i X) \right) = E\left(1; r_i^{\frac{2\sg}{p-1}} U(r_i e_1)V_i \right).
$$
Hence, for every $i$, we have
$$
\aligned
 0 \leq  & \int_{\partial^+\mathcal{B}_1^+} t^{1-2\sg} | \frac{\partial V_i}{\partial \nu} |^2 + \frac{2\sg}{p-1} \int_{\partial^+\mathcal{B}_1^+} t^{1-2\sg}\frac{\partial V_i}{\partial \nu} V_i \\
& + \frac{1}{2}\frac{2\sg}{p-1}\left(\frac{4\sg}{p-1} - (n-2\sg)\right)   \int_{\partial^+\mathcal{B}_1^+} t^{1-2\sg} V_i^2  \\
& -  \frac{1}{2} \int_{\partial^+\mathcal{B}_1^+} t^{1-2\sg} |\nabla V_i|^2  +  \frac{1}{p+1}\int_{\partial B_1} \left(r_i^{\frac{2\sg}{p-1}} U(r_i e_1)\right)^{p-1}V_i^{p+1}.
\endaligned
$$
Letting $i \rightarrow \infty$, we obtain
$$
\aligned
 0  & \leq  \int_{\partial^+\mathcal{B}_1^+} t^{1-2\sg} | \frac{\partial V}{\partial \nu} |^2 + \frac{2\sg}{p-1} \int_{\partial^+\mathcal{B}_1^+} t^{1-2\sg}\frac{\partial V}{\partial \nu} V \\
& ~~~~ + \frac{1}{2}\frac{2\sg}{p-1}\left(\frac{4\sg}{p-1} - (n-2\sg)\right)   \int_{\partial^+\mathcal{B}_1^+} t^{1-2\sg} V^2 -  \frac{1}{2} \int_{\partial^+\mathcal{B}_1^+} t^{1-2\sg} |\nabla V|^2 \\
& =a^2(n-2\sg)^2 \int_{\partial^+\mathcal{B}_1^+} t^{1-2\sg} - a(n-2\sg)\frac{2\sg}{p-1}\int_{\partial^+\mathcal{B}_1^+} t^{1-2\sg} \\
& ~~~~ + \frac{1}{2}\frac{2\sg}{p-1}\left(\frac{4\sg}{p-1} - (n-2\sg)\right)   \int_{\partial^+\mathcal{B}_1^+} t^{1-2\sg}-\frac{1}{2}a^2(n-2\sg)^2\int_{\partial^+\mathcal{B}_1^+} t^{1-2\sg}\\
& = \frac{\sg}{p-1} \left(\frac{2\sg}{p-1} - (n-2\sg)\right)   \int_{\partial^+\mathcal{B}_1^+} t^{1-2\sg} <0.
\endaligned
$$
Note that in the last inequality we have used the fact  $\frac{2\sg}{p-1} - (n-2\sg) <0$ because $\frac{n}{n-2\sg} < p$.  Obviously, we get a contradiction. 
\ep

To characterize the "order" of an isolated singularity we establish the following sufficient condition for removability of isolated singularities.  For its proof,  we adapt the arguments from \cite{G-S}, but there are extra difficulties. Such as, we need extra efforts to derive the estimates of $U$ from its trace $u$. 
\begin{proposition}\label{P304}
Let $U$ be a nonnegative solution of \eqref{Iso3} with $ \frac{n}{n-2\sg} < p < \frac{n+2\sg}{n-2\sg}$.  If
\begin{equation}\label{P304A}
\int_{\epsilon \leq |x| \leq 1} u^{\frac{(p-1)n}{2\sg}} \leq c < +\infty
\end{equation}
with $c$ independent of $\epsilon$, then the singularity at the origin is removable, i.e., u(x) can be extended to a continuous solution in the entire ball $B_1$.
\end{proposition}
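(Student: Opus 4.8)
\emph{The plan} is to adapt the Gidas--Spruck removability argument, the genuinely new point being the passage between the trace $u$ and the extension $U$ (exactly the extra difficulty flagged in the introduction). First I would rewrite the boundary relation in \eqref{Iso3} in linear form, $\frac{\partial U}{\partial \nu^\sg}(x,0)=a(x)u(x)$ with $a:=u^{p-1}$. The exponent in \eqref{P304A} is the scaling--critical one: \eqref{P304A} says precisely that $a\in L^{n/(2\sg)}(B_1)$, and since the integral is finite, $\|a\|_{L^{n/(2\sg)}(B_r)}\to 0$ as $r\to 0$. I would record two elementary consequences. Because $\frac{n}{n-2\sg}<p$ forces $p<\frac{(p-1)n}{2\sg}$, Hölder's inequality turns \eqref{P304A} into $u\in L^p(B_1)$, i.e. the boundary nonlinearity satisfies $u^p\in L^1(B_1)$. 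Moreover the a priori bound $u(x)\le c|x|^{-2\sg/(p-1)}$ of Proposition~\ref{P-301}, together with $\frac{2\sg}{p-1}<n-2\sg$, gives $u\in L^1(B_1)$, and the interior gradient bounds of \cite{J-L-X} combined with the Harnack inequality \eqref{P-3012} give $|\nabla U(X)|\lesssim |X|^{-\frac{2\sg}{p-1}-1}$ near $0$, so that $\nabla U\in L^1(t^{1-2\sg},\mathcal B_{1/2}^+)$.

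The first, and main, step is to pass the equation through the origin: to show $U\in W^{1,2}(t^{1-2\sg},\mathcal B_{1/2}^+)$ and that $U$ solves \eqref{Iso3} weakly in all of $\mathcal B_{1/2}^+$, with no singular contribution at $0$. The tool is a Caccioppoli estimate on punctured half--balls. One tests \eqref{Solu} with $\Phi=U\eta^2$, where $\eta=\eta_\rho\eta_\delta$, $\eta_\rho$ is a fixed bulk cutoff supported in $\mathcal B_\rho^+$ with $\rho$ small, and $\eta_\delta$ vanishes on $\mathcal B_\delta^+$, equals $1$ off $\mathcal B_{2\delta}^+$, and has $\int t^{1-2\sg}|\nabla\eta_\delta|^2\to 0$ as $\delta\to 0$ --- this last property because a point has zero capacity in the relevant weighted Sobolev space, whose effective dimension $n+2-2\sg$ exceeds $2$. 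After applying the trace--Sobolev inequality of Proposition~\ref{P3} on the left-hand side and Hölder against $\|a\|_{L^{n/(2\sg)}(B_\rho)}$ on the right-hand side --- choosing $\rho$ small so this norm is small enough to absorb --- what remains is to control the cutoff term $\int t^{1-2\sg}U^2|\nabla\eta_\delta|^2$ near the origin. This is exactly where one needs to derive estimates of $U$ from those of its trace: one uses the Harnack inequalities \eqref{P-3012}--\eqref{P-3013} to replace $U$ on $\{|X|\sim\delta\}$ by the spherical average $\bar u(\delta)$ of the trace, and then the hypothesis \eqref{P304A} --- which forces $\bar u(\delta)$ to be milder than $\delta^{-2\sg/(p-1)}$ on a sequence of scales $\delta_j\downarrow 0$ --- to make this term tend to $0$ along $\delta_j$. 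Letting $\delta_j\to 0$ in the Caccioppoli bound yields $U\in W^{1,2}(t^{1-2\sg},\mathcal B_{\rho/2}^+)$, and letting $\delta_j\to 0$ in \eqref{Solu} (using $u^p\in L^1$ together with $\int t^{1-2\sg}|\nabla\eta_{\delta_j}|^2\to 0$) shows that \eqref{Solu} persists for \emph{all} $\Phi\in C_c^\infty(\mathcal B_{\rho/2}^+\cup\partial^0\mathcal B_{\rho/2}^+)$, i.e. $U$ is a genuine weak solution across $0$.

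Once $U\in W^{1,2}(t^{1-2\sg})$ up to the origin, the trace embedding gives $u\in L^{2n/(n-2\sg)}(B_{\rho/2})$, and I would run a Brezis--Kato--type iteration on the now unpunctured half--ball: split $a=a_1+a_2$ with $\|a_1\|_{L^{n/(2\sg)}}$ as small as desired and $a_2\in L^\infty$, test with $\Phi=U\min(U,L)^{2(\beta-1)}\eta^2$, use Proposition~\ref{P3} and Hölder, absorb the $a_1$--term and treat the $a_2$--term as lower order, let $L\to\infty$, and iterate in $\beta$. This bootstraps $u\in L^q(B_{\rho/4})$ for every finite $q$; hence $u^p\in L^q(B_{\rho/4})$ for every $q$, so the Neumann datum of $U$ across the origin lies in some $L^q$ with $q>\frac{n}{2\sg}$. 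The regularity theory of \cite{J-L-X} then gives $U\in C^\alpha$ up to the flat boundary near $0$, and feeding this back, $u=U(\cdot,0)\in C^{2,\alpha}$ near $0$. In particular $u$ extends continuously (indeed as a classical solution) across the origin, so the singularity is removable.

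The step I expect to be the main obstacle is the first one --- proving that $U$ has finite weighted energy near $0$ and solves \eqref{Iso3} weakly across it --- and, within it, the estimate of the near--origin cutoff term: \eqref{P304A} controls only the trace $u$ on $\{t=0\}$, whereas the Caccioppoli argument needs interior information on $U$, so one must transport the trace bound into the interior through the Harnack inequalities of Proposition~\ref{P-301}. Once that transfer is carried out, the remainder is a routine bootstrap together with the regularity theory already available from \cite{J-L-X}.
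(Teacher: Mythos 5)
Your overall architecture---a Caccioppoli inequality on punctured half-balls, remove the puncture cutoff, then bootstrap and appeal to the regularity theory of \cite{J-L-X}---is the right framework, and your Step~2 (Brezis--Kato iteration on the unpunctured half-ball) would be fine once $U\in W^{1,2}(t^{1-2\sg},\mathcal B_{1/2}^+)$ with the equation holding across $0$. But Step~1 has a genuine gap, and it sits precisely where you flagged the main obstacle.

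You test with $\Phi=U\eta_\rho^2\eta_\delta^2$, so the near--origin cutoff term is
$$
\int t^{1-2\sg}|\nabla\eta_\delta|^2 U^2 \ \lesssim\ \delta^{-2}\,\bar u(\delta)^2\,\delta^{n+2-2\sg}\ =\ \bar u(\delta)^2\,\delta^{n-2\sg},
$$
after Harnack replaces $U$ by $\bar u(\delta)$ on $\{\delta\le|X|\le 2\delta\}$. You claim \eqref{P304A} makes this tend to $0$ along a sequence of scales. But \eqref{P304A} only gives $\bar u(\delta)=o(\delta^{-2\sg/(p-1)})$, and in the subcritical range $\frac{n}{n-2\sg}<p<\frac{n+2\sg}{n-2\sg}$ one has $\frac{2\sg}{p-1}>\frac{n-2\sg}{2}$, so $\bar u(\delta)^2\delta^{n-2\sg}$ is only $o(\delta^{-\kappa})$ for some $\kappa>0$. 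Concretely: $u(x)\sim|x|^{-2\sg/(p-1)}|\log|x||^{-\alpha}$ with $\alpha>\frac{2\sg}{(p-1)n}$ satisfies both the upper bound \eqref{P-3011} and \eqref{P304A}, yet $\bar u(\delta)^2\delta^{n-2\sg}\to\infty$ along \emph{every} sequence $\delta\to 0$. So the $q=1$ Caccioppoli estimate does not close: the cutoff term does not vanish, and you cannot conclude $U\in W^{1,2}$ up to the origin this way.

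The paper's fix---which you need and cannot skip---is the Serrin truncation. Instead of $U\eta^2$, the test function is $(\eta\bar\eta)^2 G(U)$ with $G=FF'-q$ and $F(u)$ a $C^1$ truncated power: $F(u)=u^q$ for $u\le l$ and $F(u)\sim u^{q_0}$ for $u\ge l$, where $q_0=\tfrac{(n-2\sg)(p-1)}{4\sg}<1$ in the subcritical range. The cap at exponent $q_0$ is exactly calibrated so that, after Hölder, Harnack, and the a priori bound \eqref{P-3011} (used to eat the $t$-weight), the cutoff term reduces to $\int_{B_{2\epsilon}}u^{2q_0\frac{n}{n-2\sg}}=\int_{B_{2\epsilon}}u^{\frac{(p-1)n}{2\sg}}$, which is precisely what \eqref{P304A} controls. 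This truncation (not the raw exponent $1$) is what makes the transfer of the trace bound into the interior actually close, and it is the missing ingredient in your Step~1. With it in place, the rest of your outline proceeds along the same lines as the paper: absorb the nonlinear term for small $R$, let $\epsilon\to 0$, let $l\to\infty$, iterate finitely many times, and finish with $u\in L^q$ for $q>\frac{n}{2\sg}$ and the H\"older regularity of \cite{J-L-X}.
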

\bp
Let 
$$p_0=\frac{n-2\sg}{2\sg}\left(p-\frac{n}{n-2\sg}\right) ~~~~ \textmd{and} ~~~~ q_0=\frac{1}{2}(p_0+1)=\frac{1}{2}\frac{n-2\sg}{2\sg}(p-1).$$
Following Serrin \cite{S1}, we define, for $q\geq q_0$, $l>0$
$$
F(u)=
\begin{cases}
u^q~~~~~~~~~ & \textmd{for} ~ 0< u \leq l,\\
\frac{1}{q_0} \left[ ql^{q-q_0} u^{q_0} + (q_0- q) l^q \right]~~~ & \textmd{for} ~ l \leq u,
\end{cases}
$$
and
$$
G(u)=F(u)F^\prime(u) - q.
$$
Clearly, $F$ is a $C^1$ function of $u$ and $G$ is a piecewise smooth function of $u$ with a corner at $u=l$. Moreover, since $p_0 > 0$ (implied by $\frac{n}{n-2\sg} < p$), $F, G$ satisfy
\begin{equation}\label{P30411}
F \leq \frac{q}{q_0}l^{q-q_0} u^{q_0},~~~~~~~~ uF^\prime \leq qF,
\end{equation}
\begin{equation}\label{P30422}
|G|\leq FF^\prime,
\end{equation}
\begin{equation}\label{P30433}
G^\prime \geq 
\begin{cases}
\frac{1}{q}  p {F^\prime}^2~~~~~ & \textmd{for} ~ 0< u < l,\\
\frac{1}{q_0}  p_0 {F^\prime}^2~~~~~ & \textmd{for} ~ l \leq u.
\end{cases}
\end{equation}
For any $ 0< R <1$, let $\eta$ and $\bar{\eta}$ be nonnegative $C^\infty$ function with $0 \leq \eta, \bar{\eta} \leq 1$ in $\mathcal{B}_R=\{X\in \mathbb{R}^{n+1} : |X| <  R\}$,  $\eta$ having compact support in $\mathcal{B}_R$, and $\bar{\eta}$ vanishing in some neighborhood of the origin.  Take $(\eta\bar{\eta})^2 G(U)$ as a test function into \eqref{Solu}, we have
\begin{equation}\label{P30444}
\aligned
& \int_{\mathcal{B}_R^+} t^{1-2\sg} (\eta\bar{\eta})^2 G^\prime (U) |\nabla U|^2 + 2\int_{\mathcal{B}_R^+} t^{1-2\sg}\eta\bar{\eta} G(U)\nabla U \cdot \nabla (\eta\bar{\eta}) \\
& = \int_{B_R} (\eta\bar{\eta})^2 G(u)  u^p.
\endaligned
\end{equation}
Using \eqref{P30411} -- \eqref{P30433} and simplifying we obtain from \eqref{P30444}
\begin{equation}\label{P30455}
\int_{\mathcal{B}_R^+} t^{1-2\sg} (\eta\bar{\eta})^2 |\nabla (F(U))|^2\leq C(q) \bigg\{\int_{\mathcal{B}_R^+} t^{1-2\sg}|\nabla (\eta\bar{\eta})|^2 F^2 + \int_{B_R} (\eta\bar{\eta})^2 u^{p-1} F^2 \bigg\}.
\end{equation}
By H\"{o}lder and Proposition \ref{P3}, we have
$$
\aligned
\int_{B_R} (\eta\bar{\eta})^2 u^{p-1} F^2 & \leq \left( \int_{B_R} (u^{p-1})^{\frac{n}{2\sg}} \right)^{\frac{2\sg}{n}} \left( \int_{B_R} (\eta\bar{\eta} F)^{\frac{2n}{n-2\sg}} \right)^{\frac{n-2\sg}{n}} \\
& \leq C_{n, \sg}^2 \left( \int_{B_R} (u^{p-1})^{\frac{n}{2\sg}} \right)^{\frac{2\sg}{n}}   \left(\int_{\mathcal{B}_R^+} t^{1-2\sg} |\nabla (\eta\bar{\eta} F)|^2 \right) \\
& \leq C_{n, \sg}^2 \|u^{p-1}\|_{L^{\frac{n}{2\sg}}(B_R)} \bigg\{ \int_{\mathcal{B}_R^+} t^{1-2\sg} (\eta\bar{\eta})^2 |\nabla F|^2 \\
&  ~~~~  + \int_{\mathcal{B}_R^+} t^{1-2\sg} |\nabla(\eta\bar{\eta})|^2  F^2 \bigg\}.
\endaligned
$$
By the assumption \eqref{P304A}, we can choose $R$ small enough (depending on $q, n$ and $\sg$ ) such  that 
$$
\|u^{p-1}\|_{L^{\frac{n}{2\sg}}(B_R)} \leq \frac{1}{2} \frac{1}{C(q)C_{n, \sg}^2},
$$
Hence, from \eqref{P30455}, we obtain 
\begin{equation}\label{P30466}
\int_{\mathcal{B}_R^+} t^{1-2\sg} (\eta\bar{\eta})^2 |\nabla (F(U))|^2\leq C(q) \int_{\mathcal{B}_R^+} t^{1-2\sg}|\nabla (\eta\bar{\eta})|^2 F^2
\end{equation}
with a new constant $C(q)$.  Therefore, we have from \eqref{P30466}
\begin{equation}\label{P30477}
\int_{\mathcal{B}_R^+} t^{1-2\sg} (\eta\bar{\eta})^2 |\nabla (F(U))|^2\leq C(q) \bigg\{ \int_{\mathcal{B}_R^+} t^{1-2\sg}|\nabla \eta|^2 F^2 + \int_{\mathcal{B}_R^+} t^{1-2\sg}|\nabla \bar{\eta}|^2 F^2 \bigg\} 
\end{equation}
and
\begin{equation}\label{P30488}
\left( \int_{B_R} (\eta\bar{\eta} F)^{\frac{2n}{n-2\sg}} \right)^{\frac{n-2\sg}{n}}\leq C(q) \bigg\{ \int_{\mathcal{B}_R^+} t^{1-2\sg}|\nabla \eta|^2 F^2 + \int_{\mathcal{B}_R^+} t^{1-2\sg}|\nabla \bar{\eta}|^2 F^2 \bigg\}.
\end{equation}
For any $\epsilon > 0$ small enough, we choose $\bar{\eta}_\epsilon$ satisfy
\begin{equation}\label{Cut}
\bar{\eta}_\epsilon(X)=
\begin{cases}
0 ~~~~~ \textmd{for} ~   |X| \leq \epsilon, \\
1 ~~~~~ \textmd{for} ~   2\epsilon \leq |X| < R, 
\end{cases}
\end{equation}
and $|\nabla \bar{\eta}_\epsilon(X)| \leq \frac{c}{\epsilon}$ for all $X \in \mathcal{B}_R$.  By H\"{o}lder inequality
\begin{equation}\label{P30499}
\aligned
\int_{\mathcal{B}_R^+} t^{1-2\sg}|\nabla \bar{\eta}_\epsilon|^2 F^2 &  \leq \left( \int_{\mathcal{B}_{2\epsilon}^+} t^{1-2\sg}|\nabla \bar{\eta}_\epsilon|^{n+2-2\sg} \right)^{\frac{2}{n+2-2\sg}}  \left( \int_{\mathcal{B}_{2\epsilon}^+} t^{1-2\sg} F^{2\frac{n+2-2\sg}{n-2\sg}}\right)^{\frac{n-2\sg}{n+2-2\sg}} \\
& \leq  \frac{C}{\epsilon^{2}} \left( \int_{\mathcal{B}_{2\epsilon}^+} t^{1-2\sg}\right)^{\frac{2}{n+2-2\sg}}  \left( \int_{\mathcal{B}_{2\epsilon}^+} t^{1-2\sg} F^{2\frac{n+2-2\sg}{n-2\sg}}\right)^{\frac{n-2\sg}{n+2-2\sg}} \\
& \leq C \left( \int_{\mathcal{B}_{2\epsilon}^+} t^{1-2\sg} F^{2\frac{n+2-2\sg}{n-2\sg}}\right)^{\frac{n-2\sg}{n+2-2\sg}},
\endaligned
\end{equation}
where $C$ is a positive constant independent of $\epsilon$.  Since, by \eqref{P30411}, the Harnack inequality \eqref{P-3013} and \eqref{P-3011}, we have
\begin{equation}\label{P304100}
\aligned
\int_{\mathcal{B}_{2\epsilon}^+} t^{1-2\sg} F^{2\frac{n+2-2\sg}{n-2\sg}} & \leq C(l, q) \int_{\mathcal{B}_{2\epsilon}^+} t^{1-2\sg} U^{2q_0\frac{n+2-2\sg}{n-2\sg}} \\
& \leq C(l,q) \int_0^{2\epsilon} \left( \sup_{\partial^+ \mathcal{B}_s^+} U\right)^{2q_0\frac{n+2-2\sg}{n-2\sg}} s^{n+1-2\sg} ds\\
& \leq C(l,q) \int_0^{2\epsilon} \left( \inf_{\partial^+ \mathcal{B}_s^+} U\right)^{2q_0\frac{n}{n-2\sg}} s^{-2q_0\frac{4\sg(1-\sg)}{(p-1)(n-2\sg)}}s^{n+1-2\sg} ds\\
& \leq C(l,q) \int_0^{2\epsilon} \left( \inf_{\partial B_s} u\right)^{2q_0\frac{n}{n-2\sg}} s^{n-1} ds\\
& \leq C(l,q) \int_0^{2\epsilon} \left( \frac{1}{|\partial B_s|}\int_{\partial B_s} u^{2q_0\frac{n}{n-2\sg}}\right) s^{n-1} ds\\
& \leq C(l,q) \int_{B_{2\epsilon}} u^{2q_0\frac{n}{n-2\sg}} = C(l,q) \int_{B_{2\epsilon}} u^{\frac{(p-1)n}{2\sg}}. 
\endaligned 
\end{equation}
Here we have used the fact $2q_0\frac{n}{n-2\sg}=\frac{(p-1)n}{2\sg} >1$ because $ p-1 > \frac{2\sg}{n-2\sg}$. Now, we have from \eqref{P304A}, \eqref{P30499} and \eqref{P304100}
$$
\int_{\mathcal{B}_R^+} t^{1-2\sg}|\nabla \bar{\eta}_\epsilon|^2 F^2 \rightarrow 0 ~~~~ \textmd{as} ~ \epsilon \rightarrow 0.
$$
This together with \eqref{P30477} and \eqref{P30488}, we obtain 
\begin{equation}\label{P304111}
\int_{\mathcal{B}_R^+} t^{1-2\sg} \eta^2 |\nabla (F(U))|^2\leq C(q) \int_{\mathcal{B}_R^+} t^{1-2\sg}|\nabla \eta|^2 F^2 
\end{equation}
and
\begin{equation}\label{P304122}
\left( \int_{B_R} (\eta F)^{\frac{2n}{n-2\sg}} \right)^{\frac{n-2\sg}{n}}\leq C(q) \int_{\mathcal{B}_R^+} t^{1-2\sg}|\nabla \eta|^2 F^2. 
\end{equation}
Let $l \rightarrow +\infty$, since $F(u)\rightarrow u^q$, we have 
\begin{equation}\label{P304133}
\int_{\mathcal{B}_R^+} t^{1-2\sg} \eta^2 |\nabla U^q|^2 \leq C(q) \int_{\mathcal{B}_R^+} t^{1-2\sg}|\nabla \eta|^2 U^{2q} 
\end{equation}
and
\begin{equation}\label{P304144}
\left( \int_{B_R} (\eta u^q)^{\frac{2n}{n-2\sg}} \right)^{\frac{n-2\sg}{n}} \leq C(q) \int_{\mathcal{B}_R^+} t^{1-2\sg}|\nabla \eta|^2 U^{2q}. 
\end{equation}
By a similar estimate as in \eqref{P304100}, we can obtain
\begin{equation}\label{P304155}
\aligned
\int_{\mathcal{B}_R^+} t^{1-2\sg}|\nabla \eta|^2 U^{2q}  & \leq C(q) \int_0^R \left(\inf_{\partial^+\mathcal{B}_s^+} U^{2q-2q_0\frac{2(1-\sg)}{n-2\sg}} \right)s^{n-1} ds \\
& \leq C(q) \left( \int_{B_R} u^{2q} \right)^{1-\frac{q_0}{q}\frac{2(1-\sg)}{n-2\sg}}
\endaligned
\end{equation}
Inequality \eqref{P304133}, \eqref{P304144} and \eqref{P304155} can be iterated a \textit{finite} number of times to show that
$$
U \in W^{1,2}(t^{1-2\sg}, \mathcal{B}_R^+) ~~~~ \textmd{and} ~~~~ u\in L^q(B_R)  ~~~~ \textmd{for}  ~  \textmd{all} ~ q>0.
$$
Furthermore, $U$ satisfies 
$$
\begin{cases}
-\textmd{div}(t^{1-2\sg}  \nabla U)=0~~~~~~~~~& \textmd{in} ~ \mathcal{B}_R^+,\\
\frac{\partial U}{\partial \nu^\sg}(x, 0)=U^p(x, 0)~~~~~~~~& \textmd{on} ~ \partial^0\mathcal{B}_R^+. 
\end{cases}
$$
Indeed, for $\epsilon >0$ small, let $\bar{\eta}_\epsilon$ be a smooth cut-off function as in \eqref{Cut}.   Let $\psi \in C_c^\infty (\mathcal{B}_R^+ \cup \partial^0\mathcal{B}_R^+)$. It follows from \eqref{Solu} that
\begin{equation}\label{P304166}
\int_{\mathcal{B}_R^+} t^{1-2\sg} \nabla U \nabla (\psi \bar{\eta}_\epsilon)=\int_{\partial^0\mathcal{B}_R^+} U^p \psi \bar{\eta}_\epsilon.
\end{equation}
Since
$$
\left| \int_{\mathcal{B}_R^+} t^{1-2\sg} \nabla U \nabla \bar{\eta}_\epsilon \psi \right| \leq C \epsilon^{\frac{n-2\sg}{2}} \int_{\mathcal{B}_R^+} t^{1-2\sg} |\nabla U|^2 \rightarrow 0 ~~~~ \textmd{as} ~ \epsilon \rightarrow 0, 
$$
by the dominated convergence theorem, let $\epsilon \rightarrow 0$ in \eqref{P304166}, we obtain
$$
\int_{\mathcal{B}_R^+} t^{1-2\sg} \nabla U \nabla \psi =\int_{\partial^0\mathcal{B}_R^+} U^p \psi.
$$
Since $u \in L^q(B_R)$ for some  $q>\frac{n}{2\sg}$,  it follows from Proposition 2.10  in \cite{J-L-X}  that $U$ is H\"{o}lder continuous in $\overline{\mathcal{B}_{R/2}^+}$. 
The proof of the proposition is completed.
\ep

\begin{corollary}\label{COR}
Let $U$ be a nonnegative solution of \eqref{Iso3} with $ \frac{n}{n-2\sg} < p < \frac{n+2\sg}{n-2\sg}$.  Then either the origin 0 is a removable singularity or $\lim_{|X| \rightarrow 0} U(x, t)= +\infty$. 
\end{corollary}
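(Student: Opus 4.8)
I would establish the equivalent implication: \emph{if $U$ does not tend to $+\infty$ at the origin, then the singularity is removable.} The plan is first to pass to the trace, then to upgrade the decay of $u$ to the point where Proposition \ref{P304} applies. By the Harnack inequality \eqref{P-3012}, for every $0<r<1/8$ and $|X|=r$ one has $\tfrac1C\sup_{r/2\le|x|\le 2r}u(x)\le U(X)\le C\inf_{r/2\le|x|\le 2r}u(x)$, so $U(X)$ and $u(x)$ with $|x|=|X|$ are comparable up to a fixed constant. Hence, if $\lim_{|X|\to0}U(X)\ne+\infty$ there is a sequence $x_j\to0$ with $u(x_j)$ bounded, so $|x_j|^{2\sg/(p-1)}u(x_j)\to0$ and therefore $\liminf_{|x|\to0}|x|^{2\sg/(p-1)}u(x)=0$. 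By Proposition \ref{P303} this upgrades to $\lim_{|x|\to0}|x|^{2\sg/(p-1)}u(x)=0$; it remains to deduce from this the integrability bound \eqref{P304A}.

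The main step is the decay improvement. Since the crude bound \eqref{P-3011} only gives $u^{(p-1)n/(2\sg)}\le c'|x|^{-n}$, whose integral over $B_\epsilon$ diverges logarithmically, a genuine gain is needed. Put $\varphi(s):=\sup_{0<|y|\le s}|y|^{2\sg/(p-1)}u(y)$, which is nondecreasing, bounded, and $\to0$ as $s\to0$. Because $p>\frac n{n-2\sg}$ we have $\frac{2\sg p}{p-1}<n$ and $\frac{2\sg}{p-1}<n-2\sg$, so $u^p\le\varphi(|y|)^p|y|^{-2\sg p/(p-1)}\in L^1_{\loc}$ and $u(x)=o(|x|^{-(n-2\sg)})$. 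Let $W$ be the $\sg$-harmonic (Caffarelli--Silvestre) extension of $(-\Delta)^{-\sg}\big(u^p\mathbf{1}_{B_{1/2}}\big)$, so that $W$ is $\sg$-harmonic in $\R^{n+1}_+$, $\frac{\partial W}{\partial\nu^\sg}(\cdot,0)=u^p$ on $B_{1/2}\setminus\{0\}$, and $W(x,0)=c_{n,\sg}\int_{B_{1/2}}|x-y|^{-(n-2\sg)}u(y)^p\,dy$; from $u^p\lesssim|y|^{-2\sg p/(p-1)}$ with $2\sg<\frac{2\sg p}{p-1}<n$ one gets $W(X)=O(|X|^{-2\sg/(p-1)})=o(|X|^{-(n-2\sg)})$. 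Then $U-W$ solves the homogeneous extension problem in $\mathcal{B}^+_{1/2}\setminus\{0\}$ with zero Neumann data and $|U-W|\le U+W=o(|X|^{-(n-2\sg)})$, so the B\^ocher-type removable–singularity theorem of \cite{J-L-X} (the one used in the proof of Proposition \ref{P303}) shows $U-W$ extends across $0$, whence $|U(x,0)|\le|W(x,0)|+C$ for $|x|<1/4$. For $W(\cdot,0)$ I would use
$$
W(x,0)\lesssim\int_{B_{1/2}}|x-y|^{2\sg-n}\varphi(|y|)^p|y|^{-2\sg p/(p-1)}\,dy ,
$$
split the integral over $\{|y|\le 2|x|\}$, $\{2|x|<|y|\le|x|^{1/2}\}$, $\{|x|^{1/2}<|y|<1/2\}$, and use that $\varphi$ is nondecreasing together with $\int_a^b s^{-2\sg/(p-1)-1}\,ds\lesssim a^{-2\sg/(p-1)}$ to get $W(x,0)\lesssim\big(\varphi(|x|^{1/2})^p+|x|^{\sg/(p-1)}\big)|x|^{-2\sg/(p-1)}$. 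Multiplying by $|x|^{2\sg/(p-1)}$ and taking $\sup_{|y|\le s}$ yields the recursion
$$
\varphi(s)\lesssim\varphi(s^{1/2})^{p}+s^{\sg/(p-1)}\qquad\text{for $s$ small.}
$$
Since $p>1$ and $\varphi(s^{1/2})\to0$, iterating this inequality (first linearly, to reach a logarithmic rate, then bootstrapping through the factor $\varphi^p$) forces $\varphi(s)\lesssim(\log(1/s))^{-M}$ for every $M>0$. As $\frac{(p-1)n}{2\sg}>1$ (again because $p>\frac n{n-2\sg}$), taking $M$ with $M\,\frac{(p-1)n}{2\sg}>1$ gives $\int_{B_\rho}u^{(p-1)n/(2\sg)}=\int_{B_\rho}\varphi(|x|)^{(p-1)n/(2\sg)}|x|^{-n}\,dx<\infty$, i.e.\ \eqref{P304A} holds with a bound independent of $\epsilon$.

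With \eqref{P304A} verified, Proposition \ref{P304} shows the singularity at the origin is removable, proving the dichotomy; when the singularity is not removable, the quantitative bound $U(X)\ge C_1|X|^{-2\sg/(p-1)}$ follows from Theorem \ref{THM01} and Remark \ref{Re01}. I expect the only delicate point to be the second paragraph: converting the purely qualitative $\lim_{|x|\to0}|x|^{2\sg/(p-1)}u(x)=0$ into a rate strong enough for \eqref{P304A}. The mechanism is that superlinearity ($p>1$) makes one pass through the representation formula replace the error factor $\varphi$ by $\varphi^{p}$ (plus a strictly smaller far-field power of $|x|$), which self-improves under iteration, while the integrability $u^p\in L^1_{\loc}$ — exactly where $p>\frac n{n-2\sg}$ is used — is what makes that representation available. (If Theorem \ref{THM01} may be quoted, the corollary is immediate: non-removability gives \eqref{Be01}, hence $u\to\infty$, hence $U\to\infty$ by \eqref{P-3012}; but presumably one wants the self-contained route above, since the ``$\lim=0\Rightarrow$ removable'' step is itself what feeds into Theorem \ref{THM01}.)
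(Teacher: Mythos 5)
The paper's proof is far shorter than what you propose, and it rests on a key observation you have not used: Proposition \ref{P304} by itself already forces $u$ to be unbounded near the origin whenever the singularity is not removable. Indeed, if $u$ were bounded near $0$ then $\int_{\epsilon\le|x|\le1}u^{(p-1)n/(2\sg)}$ would be bounded uniformly in $\epsilon$, and Proposition \ref{P304} would make the singularity removable. So non-removability supplies a sequence $x_j\to0$ with $u(x_j)\to+\infty$. The Harnack inequality \eqref{P-3012} then gives $\inf_{|X|=|x_j|}U(X)\ge C^{-1}u(x_j)\to\infty$, and the minimum principle for the extension equation (applicable because $\partial U/\partial\nu^\sg=u^p\ge0$, so $U$ is a supersolution and attains its minimum on the spherical part of the boundary) yields $U(X)\ge C^{-1}\min\left(u(x_j),u(x_{j+1})\right)$ in the annulus $|x_{j+1}|\le|X|\le|x_j|$. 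Hence $U\to+\infty$. That is the whole proof; it uses only Proposition \ref{P304}, the Harnack inequality, and the minimum principle --- not Proposition \ref{P303} and not the monotonicity formula.

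Your plan is not wrong in spirit, but it reproves, under a different name, the hard step of Theorem \ref{THM01}: deducing \eqref{P304A} from $\lim_{|x|\to0}|x|^{2\sg/(p-1)}u(x)=0$. The paper settles that step only afterwards, and by a quite different mechanism: it pairs the equation against $\zeta\Phi$ with $\Phi=|X|^{-\tau}-\delta t^{2\sg}|X|^{-(\tau+2\sg)}$, exploiting that $\Phi$ is a supersolution whose Neumann data equals $2\delta\sg|x|^{-2\sg}\phi$; only the soft estimate $u^{p-1}=o(|x|^{-2\sg})$ is needed, with no quantitative rate on $\varphi$. Your representation/B\^ocher bootstrap carries unaddressed debt: you need a \emph{local} removable-singularity statement for the sign-changing difference $U-W$ on $\mathcal{B}^+_{1/2}\setminus\{0\}$, whereas the B\^ocher theorem cited in the proof of Proposition \ref{P303} is for nonnegative solutions on all of $\R^{n+1}_+\setminus\{0\}$; and the recursion must be checked with care (the near-field region $\{|y|\le2|x|\}$ produces $\varphi(2|x|)^p$, which you then relax to $\varphi(|x|^{1/2})^p$, and the multiplicative constants grow with each iteration, so one must stop after finitely many steps). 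For a short corollary this is the wrong scale of effort; the intended route is the contrapositive of Proposition \ref{P304} plus Harnack plus the minimum principle.
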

\bp By Proposition \ref{P304}, if the origin is not a removable singularity, then  there exists a sequence of points $\{x_j\}$ such that
$$
r_j=|x_j| \rightarrow 0 ~~~~ \textmd{and} ~~~~U(x_j, 0) \rightarrow +\infty ~~~ \textmd{as} ~ j\rightarrow \infty.
$$
By the Harnack inequality \eqref{P-3012}, we have
$$
\inf_{|X|=r_j}U(X) \geq C^{-1} U(x_j, 0).
$$
By the maximum principle, 
$$
U(X) \geq \inf_{|X|=r_j, r_{j+1}} U(X) \geq C^{-1} \min (U(x_j, 0), U(x_{j+1}, 0)) ~~~ \textmd{in} ~ r_{j+1} \leq |X| \leq r_j.
$$
Hence, we have  $U(X) \rightarrow +\infty$ as $|X|\rightarrow 0$. 
\ep

\noindent {\bf Proof of Theorem \ref{THM01}.}  By Proposition \ref{P-301},
$$
u(x) \leq c |x|^{-\frac{2\sg}{p-1}}.
$$
If \eqref{Be01} does not hold, then
$$
\liminf_{x\rightarrow 0} |x|^{\frac{2\sg}{p-1}} u(x)=0.
$$
It follows Proposition \ref{P303} that
\begin{equation}\label{Con}
\lim_{x\rightarrow 0} |x|^{\frac{2\sg}{p-1}} u(x)=0
\end{equation}
We will prove that  the origin is a removable singularity.  It suffices to establish \eqref{P304A} by Proposition \ref{P304}. 

Let $\tau=\frac{n-2\sg}{p-1}(p-\frac{n}{n-2\sg})$ and $0 < \delta < 1$. Define 
$$
\Phi=|X|^{-\tau} - \delta t^{2\sg} |X|^{-(\tau + 2\sg)}. 
$$
Then we can choose $\delta$ small (depending only on $n$ , $\sg$ and $p$) such that
\begin{equation}\label{THM-1}
\begin{cases}
-\textmd{div}(t^{1-2\sg}  \nabla \Phi) \geq 0~~~~~~~~~& \textmd{in} ~\mathbb{R}^{n+1}_+,\\
\frac{\partial \Phi}{\partial \nu^\sg}(x, 0)=2\delta\sigma |x|^{-2\sg} \phi(x)~~~~~~~~& \textmd{on} ~\mathbb{R}^n \backslash \{0\},
\end{cases}
\end{equation}
where $\phi (x)=\Phi(x,0)=|x|^{-\tau}$. Take $\xi_1(s)\in C_c^\infty(\mathbb{R})$ satisfying  $0\leq\xi_1\leq 1$ in $\mathbb{R}$ and
$$
\xi_1(s)=\begin{cases}
0~~~~~~&\textmd{if}~|s|\leq 1,\\
1~~~~~~&\textmd{if}~|s|\geq 2.
\end{cases}
$$
Take $\xi_2(s)\in C_c^\infty(\mathbb{R})$ satisfying  $0\leq\xi_2\leq 1$ in $\mathbb{R}$ and
$$
\xi_2(s)=\begin{cases}
1~~~~~~&\textmd{if}~|s|\leq \frac{1}{2},\\
0~~~~~~&\textmd{if}~|s|\geq \frac{3}{4}.
\end{cases}
$$
For any  $\epsilon >0$ small,  we choose   $\zeta(X) \in C_c^\infty (\epsilon < |X| < 1)$ as follow
$$
\zeta(X)=\begin{cases}
\xi_1(\frac{|X|}{\epsilon})~~~~~~&\textmd{if}~|X|\leq \frac{1}{2},\\
\xi_2(|X|)~~~~~~&\textmd{if}~|X|\geq \frac{1}{2}.
\end{cases}
$$
Using $\zeta\Phi$ as a test function in \eqref{Solu},  and the divergence theorem we obtain
\begin{equation}\label{THM-2}
\aligned
\int_{B_1} u\zeta \phi \left(\frac{1}{\phi}\frac{\partial \Phi}{\partial \nu^\sg} - u^{p-1}\right) =  & \int_{\mathcal{B}_1^+} U\bigg\{ \textmd{div}(t^{1-2\sg} \nabla \Phi) \zeta  + 2 t^{1-2\sg} \nabla \zeta\cdot \nabla\Phi \\
& + \textmd{div}(t^{1-2\sg} \nabla \zeta) \Phi \bigg\} - \int_{B_1} u \phi \frac{\partial \zeta}{\partial \nu^\sg}. 
\endaligned
\end{equation}
Since $\zeta$ is radial, we have 
\begin{equation}\label{THM-3}
\frac{\partial \zeta}{\partial \nu^\sg} (x, 0)=-\lim_{t \rightarrow 0^+} t^{1-2\sg} \partial_t \zeta=0 ~~~~~\textmd{in} ~~ B_1.
\end{equation}
By \eqref{THM-1}, we obtain  
\begin{equation}\label{THM-4}
\int_{\mathcal{B}_1^+} U \textmd{div}(t^{1-2\sg} \nabla \Phi) \zeta \leq 0.
\end{equation}
Using \eqref{P-3011}  and the Harnack inequality \eqref{P-3012}, we have 
\begin{equation}\label{THM-5}
\aligned
& \int_{\mathcal{B}_1^+}  U \bigg\{ 2t^{1-2\sg}  \nabla \zeta\cdot \nabla\Phi  + \textmd{div}(t^{1-2\sg} \nabla \zeta) \Phi \bigg\}  \\
& \leq C_1 +  \int_{\mathcal{B}_{2\epsilon}^+ \backslash \mathcal{B}_{\epsilon}^+}     U \bigg\{ 2t^{1-2\sg}  |\nabla \zeta| |\nabla\Phi| + |\textmd{div}(t^{1-2\sg} \nabla \zeta)| \Phi \bigg\}\\
& \leq C_1 +  C\epsilon^{-\frac{2\sg}{p-1}-1} \int_{\mathcal{B}_{2\epsilon}^+ \backslash \mathcal{B}_{\epsilon}^+} \left( t^{1-2\sg}|X|^{-(\tau+1)} + |X|^{-(\tau+2\sg)} \right)  \\
&~~~~~~~~~~ + C\epsilon^{-\frac{2\sg}{p-1}-\tau} \int_{\mathcal{B}_{2\epsilon}^+ \backslash \mathcal{B}_{\epsilon}^+} t^{1-2\sg} \left( |\Delta \zeta| + \epsilon^{-1} \left|  \xi_1^\prime(\frac{|X|}{\epsilon})  \right|    \frac{1}{|X|} \right) \\
& \leq C_1 + C \epsilon^{n-(\tau + 2\sg)- \frac{2\sg}{p-1}} \leq C < +\infty
\endaligned
\end{equation}
with $C$ independent of $\epsilon$. Hence, we obtain 
$$
\int_{B_1} u\zeta \phi \left(\frac{1}{\phi}\frac{\partial \Phi}{\partial \nu^\sg} - u^{p-1}\right)  \leq C < +\infty.
$$
By \eqref{Con}, we have $u^{p-1}=o(|x|^{-2\sg})$, and while 
$$
\frac{1}{\phi}\frac{\partial \Phi}{\partial \nu^\sg}=2\delta\sigma |x|^{-2\sg}~~~~~~ \textmd{in} ~ B_1 \backslash \{0\}. 
$$
Therefore
\begin{equation}\label{THM-6}
\int_{B_1}u\zeta |x|^{-(n-\frac{2\sg}{p-1})} = \int_{B_1}u\zeta |x|^{-(\tau + 2\sg)} \leq C < +\infty
\end{equation}
with $C$ independent of $\epsilon$.  Again by \eqref{P-3011}, we have
$$
\aligned
\int_{2\epsilon \leq |x| \leq 1/2} u^{\frac{(p-1)n}{2\sg}} &   \leq C \int_{2\epsilon \leq |x| \leq 1/2} u |x|^{-\frac{2\sg}{p-1}[\frac{(p-1)n}{2\sg}-1]} \\
&  \leq  C \int_{B_1}u\zeta |x|^{-(n-\frac{2\sg}{p-1})} \leq C < +\infty
\endaligned
$$
with $C$ independent of $\epsilon$. Thus, we establish \eqref{P304A} and complete the proof of Theorem \ref{THM01}. 
\hfill$\square$

\end{document}